\newtheorem{theorem}{\bf Theorem}
\newtheorem{lemma}{\bf Lemma}
\newtheorem{assumption}{\bf Assumption}
\newenvironment{proof}{{\it \bf Proof:}}{\hfill $\blacksquare$\par}
\newtheorem{Remark}{\bf Remark}
\begin{document}

\title{Optimally combined incentive for cooperation among interacting agents in population games}

\author{Shengxian~Wang,~Ming~Cao, \IEEEmembership{Fellow, IEEE}, and~Xiaojie~Chen
\thanks{This research was supported by the National Natural Science Foundation of China (Grant Nos. 62036002 and 61976048). S.W. acknowledges the support from China Scholarship Council (Grant No. 202006070122). (Corresponding authors: Ming Cao and Xiaojie Chen).}
\thanks{S. Wang is with School of Computer and Information, Anhui Normal University, Wuhu 241002, China, also with the School of Mathematical Sciences, University of Electronic Science and Technology of China, Chengdu 611731, China, and also with ENTEG, Faculty of Science and Engineering, University of Groningen, 9747 AG Groningen, (e-mail: shengxian.wang@ahnu.edu.cn).}
\thanks{M. Cao is with ENTEG, Faculty of Science and Engineering, University of Groningen, Groningen 9747 AG, The Netherlands (e-mail: m.cao@rug.nl).}
\thanks{X. Chen is with School of Mathematical Sciences, University of Electronic Science and Technology of China, Chengdu 611731, China (e-mail: xiaojiechen@uestc.edu.cn).}}

\maketitle

\begin{abstract}
Combined prosocial incentives, integrating reward for cooperators and punishment for defectors, are effective tools to promote cooperation among competing agents in population games. Existing research concentrated on how to adjust reward or punishment, as two mutually exclusive  tools, during the evolutionary process to achieve the desired proportion of cooperators in the population, and less attention has been given to exploring a combined incentive-based control policy that can steer the system to the full cooperation state at the lowest cost. In this work we propose a combined incentive scheme in a population of agents whose conflicting interactions are described by the prisoner's dilemma game on complete graphs and regular networks, respectively. By devising an index function for quantifying the implementation cost of the combined incentives, we analytically construct  the optimally combined incentive protocol by using optimal control theory.  By means of theoretical analysis, we identify the mathematical conditions, under which the optimally combined incentive scheme requires the minimal amount of cost.  In addition to numerical calculations, we further perform computer simulations to verify our theoretical results and explore their robustness on different types of network structures.
 \end{abstract}


\begin{IEEEkeywords}
Evolutionary game theory, optimal control theory, cooperative behavior, Hamilton-Jacobi-Bellman equation, optimally combined incentive protocol.
\end{IEEEkeywords}

\IEEEpeerreviewmaketitle

\section{Introduction}
\label{sec:introduction}
\IEEEPARstart{C}{ooperation} is extremely important in many realistic scenarios ranging from bees collectively carrying food~\cite{Seeley: 1988, Dugatkin: 2000} to countries taking measures to prevent the spread of the coronavirus (COVID-19) pandemic~\cite{Moon: 2020, Momtazmanesh: 2020}.
Driven by self-interests, individuals tend to pursue higher personal benefits by choosing antisocial behavior (e.g., defection), which would undermine the emergence of cooperation~\cite{Vincent:CUP,Axelrod:BBNY,Rand:TCS}. Therefore, how to promote the emergence and maintenance of cooperation in a population of
strategically interacting agents is a formidable challenge~\cite{Sunhb2022, Hu: 2021, RamaziTAC2017, Sunpnas2022}. In the last decades, evolutionary game theory has emerged as a powerful mathematical tool to investigate the problem of cooperation in social dilemmas \cite{Smith: 1982,  Gintis: 2009, Santos:2008, RiehlTAC2016, LiNC2020,  Zhang: 2018, MadeoTAC2014, GovaertTAC2020, Chen2023TAC}, such as the classic  prisoner's dilemma game~\cite{Nowak:1992, LiTEC2016, HilbePNAS13}.

Note that in a  social dilemma, without considering additional control measures into the game system,  there may be no evolutionary advantage to cooperation~\cite{Sasaki: 2012}.  To intervene in such an adverse situation,  prosocial incentives, such as reward for behaving prosocially and punishment for behaving antisocially, can be used to sustain cooperation among unrelated and competing agents~\cite{Riehl1: 2018, Vasconcelos: 2013, Paarporn: 2018,  Ntemos: 2021, Zhutac2022, TanTAC2016, MorimotoTAC2015}. 
Recent research has also studied prosocial incentives incorporating reward and punishment~\cite{Chen: 2015, Fang:2019PRSA}.
For example, 
Fang \emph{et al.} studied spatial public goods game with the synergy effect of punishment and reward, and found that cooperation can be more easily promoted by frequently implementing punishment and reward~\cite{Fang:2019PRSA}.  Chen \emph{et al.} proposed an optimal allocation policy of incentives, that is, ``first carrot, then stick" depending on the actual cooperation state in the population, and the policy is  shown, surprisingly,  to foster cooperation~\cite{Chen: 2015}.
It is stressed that in these works, rewards and punishments are applied separately during the process of incentive implementation. 
However, the simultaneous application of these two is usually observed in human society~\cite{Alventosa: 2021, Smith: 1977,  Allen: 1981, Willard: 2020}. For example, in the company regulation, employees are monetary fined for their tardiness and absence, and those with perfect attendance during an entire month get additional bonuses~\cite{Smith: 1977,  Allen: 1981}. In order to maintain the long-term stability and harmony of the society,
punishing the bad (e.g., terrorists, criminals) and rewarding the good (e.g., philanthropist) are two indispensable means for a country to guide social behavior~\cite{Willard: 2020}.
To capture these phenomena regarding the mixing usage of reward and punishment (collectively named ``\emph{combined incentive}"), it is critical to explore the evolutionary dynamics of cooperation driven by the combined incentive from a theoretical perspective, and further determine how much the combined incentive is needed for enhancing cooperation in a population.

Furthermore, providing  incentives inevitably incurs a cost for social institutions, and several studies have investigated a low-cost incentive  policy for a sufficiently good outcome of cooperation~\cite{Duong: 2021}.  For example, Sasaki \emph{et al.}  studied institutional reward and punishment in the public goods game, and found that punishment can guarantee full cooperation at a significantly lower cost, when compared with reward~\cite{Sasaki: 2012}.  Chen \emph{et al.}  studied the optimal incentive allocation scheme above, and this scheme leads to a lower cost of cooperation than the independent usage of reward or  punishment~\cite{Chen: 2015}.  In these works, however, the cost factor is not thought of as an optimization objective; that is, the above-mentioned intervention policy is not the optimal incentive-based control scheme, and  it is solely obtained by performing simulations. Subsequently,  Wang~\emph{et al.} respectively applied the optimal control theory to theoretically derive the protocols of institutional reward and punishment with the minimal cost in a well-mixed ~\cite{Wang:2019} or structured ~\cite{Wang2022JRCS} population.  To date, it is unclear whether there exists the protocol of the optimally combined incentive with the lowest cost for promoting cooperation in a population? Furthermore, if it exists, under what theoretical conditions would the optimally combined incentive have a cost advantage over optimal reward and punishment?

In this paper, we  thus consider the prisoner's dilemma game with the combined incentive in a population, and  formulate
a cost minimization framework to study the evolutionary dynamics of cooperation. To be more specific, our main goal is to determine how much the combined incentive is needed for cooperation in a population.  We use a graph to represent the population structure, and focus on the complete graphs and regular networks in this work, so that we can theoretically obtain the optimally combined incentive protocols. Given a network structure,  we first obtain the dynamical equation for the evolution of cooperation with the combined incentive. Then by analyzing the stability of equilibria for the system equation, we determine the requested  amount of incentives to drive the system to a cooperation state. Subsequently, utilizing optimal control theory~\cite{Evans: 2005, Geering: 2007}, we establish an index function for quantifying the implementation cost of the combined incentive, and further derive
the optimally combined incentive protocols leading to the minimal cumulative cost for the evolution of cooperation. Interestingly, we find that  the optimally combined incentive protocols are identical and time-invariant for these two kinds of network structures above. We further identify the theoretical conditions, under which applying optimally combined incentive is always the most cost-effective scheme. Besides analytical calculations, we perform computer simulations and confirm that our results are valid for a broad class of networks. 
The main contributions of our work are listed as follows:
\begin{itemize}
	\item  We derive the dynamical equations for the evolution of cooperation on complete graphs and regular networks with the combined incentive, respectively.
	\item We obtain the theoretical conditions of the minimally requested amounts of the combined incentives for the expected cooperation state.
	\item The optimally  combined incentive protocol with the minimal cumulative cost is theoretically obtained by using the approach of the Hamilton-Jacobi-Bellman equation~\cite{Evans: 2005, Geering: 2007}.  We find that optimally combined incentive protocols are identical and time-invariant  on both of the two network structures.
	\item  We identify the mathematical condition under which the cumulative cost required by optimally combined incentive is lowest.  We further perform numerical calculations and computer simulations to support our results and explore their robustness for different types of networks.
\end{itemize}
The remainder of this paper is organized as follows. In Section II, we formalize problem formulation. In Section III, we present the obtained optimally combined incentive protocols. In Section IV, we provide a comparative analysis among optimally combined incentives. Then numerical calculations and computer simulations are performed to validate our obtained theoretical results in Section V. Finally, 
Section VI concludes the paper. Some theoretical proofs are provided in the Appendix.


\begin{figure}[!t]
	\begin{center}
		\includegraphics[width=3.5in]{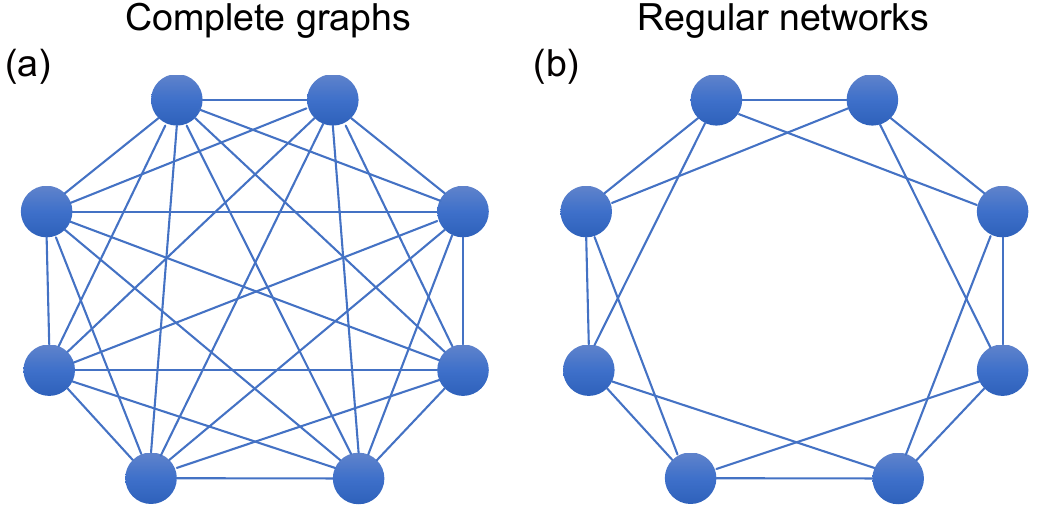}
		\caption{Network representations. Complete graphs depict the topology of a well-mixed population in panel $(a)$, whereas regular networks represent the topology of a structured case in panel $(b)$. Here, complete graphs contain 8 nodes with degree 7, and regular networks include 8 nodes with degree 4.}\label{fig1}
	\end{center}
\end{figure}

\section{Problem formulation}
\subsection{Prisoner's dilemma game with the combined incentive}

We study evolutionary games in  a population of $n\in\mathbb{N_{+}}$ agents whose interaction structure is characterized by a \emph{connected} network. The nodes  correspond to agents, where  all of these nodes are denoted by the set $\mathcal{V}=\{1, \dots, n\}$, and the edges represent who interacts with whom. Each agent $i\in\mathcal{V}$ plays the evolutionary prisoner's dilemma game with its neighbors, who chooses either to cooperate ($C$) or defect ($D$). More specifically, it can choose either to be a cooperator who confers a benefit $b$ to its opponent at a cost $c$ ($0<c<b$)  to itself or to be a defector who provides no help and saves cost. Accordingly, the payoff matrix for the game is
\begin{equation}
	\bordermatrix{
		&C  &D  \cr
		C &b-c & -c\cr
		D &b & 0 }.
	\label{matrix}
\end{equation}\label{eq1}
It can be observed  that  defection for each agent is the dominant choice in this game. However, if both agents choose $C$, then they could  yield a higher  payoff. But when both choose to defect, both get nothing. This inevitably leads to a social dilemma of cooperation.

To tackle this dilemma, we  then introduce a combined incentive policy  involving both reward (positive incentive, $R$) and punishment (negative incentive, $P$) to  support cooperation.
We consider that the incentive-providing institution always utilizes  positive incentive with the preference $p\in[0, 1]$, where each cooperator will be rewarded with an amount $pu_{R}$  received from the central institution when interacting with a neighbor, whereas using negative incentive with the preference $1-p$, where each defector will be fined by an amount  $(1-p)u_{P}$  for each interaction.
Specifically,  if playing an interaction with a $C$-agent, then the payoffs of each cooperator and defector are respectively
$\pi_{C}^C=b-c+pu_{R}$
and
$\pi_{D}^C=b-(1-p)u_{P}$. By contrast, if playing an interaction with a $D$-agent, then the payoffs of each cooperator and  defector are respectively
$\pi_C^D=-c+pu_{R}$
and
$\pi_D^D=-(1-p)u_{P}$.
It is noteworthy that  each $p$-value corresponds to a form of the combined incentive.  Moreover,  $p=1$ and $p=0$ separately represent pure reward and punishment. For convenience, we consider that $u_{P}=u_{R}=u$ in this study.

In this work, we mainly focus on well-mixed and regularly  structured populations, which are represented by  complete graphs and regular networks (see Fig.~\ref{fig1}),  respectively.
\subsubsection{Complete graphs} Using a complete graph of $n$ nodes  to characterize an infinitely large, well-mixed population, the expected payoffs of cooperators and defectors are respectively  given by  \cite{Schuster: 1983, Hofbauer: 1998}
\begin{equation}\label{eq2}
	\begin{aligned}
		f_C=\pi_{C}^Cx+\pi_C^Dy=(b-c+pu_{R})x+(-c+pu_{R})y,
	\end{aligned}
\end{equation}
and
\begin{equation}\label{eq3}
	\begin{aligned}
		f_D=\pi_{D}^Cx+\pi_D^Dy=\big[b-(1-p)u_{P}\big]x-(1-p)u_{P}y.
	\end{aligned}
\end{equation}
Without loss of generality,  $x$ and $y$ respectively denote the  proportions of cooperators and defectors in the whole population, which satisfy $x+y=1$ with  $x, y \in[0,1]$. Notably, both the notations above will be used throughout the paper.

\subsubsection{Regular  networks}

Consider a regular network of $n$ nodes with the  degree $k > 2$ ($k\in\mathbb{N_{+}}$) to capture the pairwise  interactions between neighboring  agents in structured populations. Each agent $i\in\mathcal{V}$ receives a payoff  after playing  with one neighbor, and obtains the accumulated payoff $\Pi_i $ by interacting with its all neighbors.

Based on the evolutionary principle, each  agent has the same chance to revise its strategy by comparing the above-mentioned accumulated payoff with the one obtained by a neighboring agent. Accordingly, at each time step we randomly select two neighboring $i$ and $j$ agents with $i, j \in\mathcal{V}$, and calculate their  total payoffs (i.e., $\Pi_{i}$ and $\Pi_{j}$). Then, agent $i$ may adopt the strategy of the neighboring $j$ with
the probability defined by the Fermi function \cite{Szabo: 1998}
\begin{equation}\label{eq4}
	W_{i\rightarrow j}=\frac{1}{1+e^{-\omega(\Pi_{j}-\Pi_{i})}},
\end{equation}
where $\omega\in[0,1]$ measures the strength of selection.  In the $\omega\rightarrow 0$ limit, called the  weak selection limit,  agent $i$ will stay with its own strategy or  imitate the strategy of neighbor $j$ with equal probabilities.
When $\omega>0$, the more successful neighbor $j$ is, the more likely it is that agent $i$ will adopt the strategy of neighbor $j$.

According to the strategy update rule above, we randomly choose two neighboring agents who adopt different strategies. Consequently,
the accumulated payoff of a $C$-agent who has a selected $D$-neighbor and other $k-1$ neighbors comprised of $l$ cooperators and  $k-l-1$ defectors
is
\begin{equation}\label{eq5}
	\begin{aligned}
		\Pi_C&=\sum_{l=0}^{k-1} \binom{k-1}{l} (x_{C|C})^{l}(x_{D|C})^{k-l-1} \Big[\pi_{C}^C l+\pi_{C}^D(k-l)\Big]\\
		&=(k-1)x_{C|C}b+k(pu-c),
	\end{aligned}
\end{equation}
and similarly, the accumulated payoff of a $D$-agent who has a selected $C$-neighbor and other $k-1$ neighbors comprised of $l$ cooperators and  $k-l-1$ defectors is
\begin{equation}\label{eq6}
	\begin{aligned}
		\Pi_D&=\sum_{l=0}^{k-1} \binom{k-1}{l} (x_{C|D})^{l}(x_{D|D})^{k-l-1} \Big[\pi_{D}^C (l+1)\\
		&\quad +\pi_{D}^D(k-l-1)\Big]\\
		&=\big[(k-1)x_{C|D}+1\big]b-k(1-p)u.
	\end{aligned}
\end{equation}
Here, $x_{i|j}$ denotes the conditional probability to  find an $i$-agent given that the neighboring node is occupied by a $j$-agent, where  $i, j \in \{C, D\}$ and $x_{i|j}\in[0, 1]$.

\subsection{Evolutionary dynamics of cooperation driven by the combined incentive}

Subsequently, we use the replicator equation (on graphs) to characterize the dynamical changes of the fraction of cooperators in a population over time $t$.
\subsubsection{Complete graphs}  In an infinitely large, well-mixed population, the replicator equation with the combined incentive  is described by \cite{Schuster: 1983, Hofbauer: 1998}
\begin{equation}\label{eq7}
	\begin{aligned}
		\frac{\textrm{d}x(t)}{\textrm{d}t}=x(1-x)(f_C-f_D).
	\end{aligned}
\end{equation}

Combining \eqref{eq2}, \eqref{eq3} and \eqref{eq7},  the system equation is rewritten as
\begin{equation}\label{eq8}
	\frac{\textrm{d}x(t)}{\textrm{d}t}=F_{I}(x, u, t)=x(1-x)(u-c).
\end{equation}
\subsubsection{Regular networks} Using the pair approximation approach~\cite{Ohtsuki: 2006, Ohtsuk:2006Nature}, the replicator equation  with the combined incentive  in structured populations  is described by   (see details in Appendix A)
\begin{equation}\label{eq9}
	\frac{\textrm{d}x(t)}{\textrm{d}t}=F_{S}(x, u, t)=\frac{\omega k(k-2)}{2(k-1)}x(1-x)(u-c).
\end{equation}

\begin{assumption}\label{assumption1}
	For any structured network, the population structure, the payoff matrix, and the strength of selection $\omega$ are fixed.
\end{assumption}
Assumption \ref{assumption1}  means that the considered parameters (i.e.,  $n$, $\omega$, $k$, $b$ and $c$) do not change over $t$.
Under Assumption \ref{assumption1}, for system equations obtained in \eqref{eq8} or \eqref{eq9}, we have the following two remarks.
\begin{Remark}\label{Remark1}
	The rewarding preference $p$ is not directly associated with the above two system equation, and it does not affect the evolutionary dynamics of the system.
\end{Remark}

\begin{Remark}\label{Remark2}
	In the absence of incentives (i.e., $u=0$), the system equation \eqref{eq9} is consistent with the one obtained in Ref. \cite{{Ohtsuki: 2006}}.
\end{Remark}


\subsection{Optimal control problem}

Since providing incentives is costly for institutions, the goal of optimization problems for the combined incentive is to minimize the implementation cost of incentives required for supporting cooperation, and find the explicit expressions of optimal incentive protocol $u^\ast$. To this end, we separately formulate an optimal control problem for the combined incentive on complete graphs and regular networks as follows.
\subsubsection{Complete graphs} The optimal control problem  is given as
\begin{eqnarray}\label{eq10}
	&\min\,\,&J_{I}=\int^{t_{f}}_{t_0}G_{I}(x, u, t)\textrm{d}t\nonumber\\
	& s.t.&\quad  \left\{\begin{array}{lc}
		\frac{\textrm{d}x(t)}{\textrm{d}t}=F_{I}(x, u, t),  \\
		x(t_0)=x_{0}, \\
		x(t_{f})=1-\delta,
	\end{array}\right.
\end{eqnarray}
where $G_{I}(x, u, t)=\frac{1}{2}\big[n(n-1) xpu_R+n(n-1)(1-x)(1-p)u_P\big]^{2}=\frac{1}{2}\big\{ n(n-1)u [p x+(1-p)(1-x)] \big\}^2.$
\subsubsection{Regular networks}The control problem  is given as
\begin{eqnarray}\label{eq11}
	&\min\,\,&J_{S}=\int^{t_{f}}_{t_0} G_{S}(x, u, t) \textrm{d}t\nonumber\\
	& s.t.&\quad  \left\{\begin{array}{lc}
		\frac{\textrm{d}x(t)}{\textrm{d}t}=F_{S}(x, u, t),\\
		x(t_0)=x_{0},\\
		x(t_{f})=1-\delta,
	\end{array}\right.
\end{eqnarray}
where $G_{S}(x, u, t)=\frac{1}{2}\big[kn xpu_R+kn (1-x)(1-p)u_P\big]^{2}=\frac{1}{2} \big\{ nku[px+(1-p)(1-x)] \big\}^2$.

For the optimal control problem above, the per capita incentive $u$ is regarded as the control variable, and the cumulative cost $J_{v}  (v\in\{I, S\})$ as the objective function. The cost function $J_{v}$ describes the cumulative cost on average from the initial time $t_0$  to the terminal time $t_{f}$ for the dynamical system $\frac{\textrm{d}x}{\textrm{d}t}=F_{v}(x, u, t)$, which is defined in \eqref{eq8} if $v=I$ and in \eqref{eq9}  if $v=S$.  For convenience,  the initial time is set as $0$ (i.e., $t_0=0$).  Herein, we denote the initial cooperation state (i.e., the initial proportion of cooperators) in the population by $x(t_0)=x_{0}$, and denote the desired or terminal cooperation level (i.e., the desired proportion of cooperators) in the population by $x(t_{f})$.  With the aim of exploring the optimally combined incentive protocol over the time interval $[t_0, t_{f}]$,  we now impose two assumptions for the optimal control problem.
\begin{assumption}\label{assumption2}
	The terminal time  $t_{f}$ is free (i.e., not fixed).
\end{assumption}
Assumption \ref{assumption2} stipulates that the optimal function $J_{v}^{\ast}(x, t)$ is independent of $t$, that is,  $\frac{\partial J_{v}^\ast(x, t)}{\partial t}=0$.

\begin{assumption} \label{assumption3}
	The terminal state $x(t_{f})$ is fixed, and set to be $1-\delta$ (i.e., $x(t_{f})=1-\delta>x_{0}$), where $\delta$ is the parameter that determines the expected cooperation level at $t_{f}$.
\end{assumption}
Assumption \ref{assumption3} means that  the desired cooperation level $x(t_{f})$ can be reached at the terminal time $t_{f}$.   Therefore, the quantity $\min J_{v}$ can be regarded as the objective functional of calculating the optimally combined incentive protocol $u^\ast$ with the lowest  cumulative cost.

\subsection{Monte Carlo simulations}
During a full Monte Carlo step, on average each agent has a chance to update its strategy.  The applied two different update rules are  specified on complete graphs and other networks, respectively. On complete graphs, any agent $i$ has an opportunity to imitate the strategy of another randomly chosen  agent $j$ and calculate their payoff values (i.e.,  $f_i$ and $f_j$). If $f_i<f_j$, then agent $i$ will transfer its strategy with the probability
\begin{equation}\label{eq12}
	Q_{i\rightarrow j}=\frac{f_j-f_i}{M},
\end{equation}
where $M$ ensures the proper normalization and is given by the maximum possible difference between the payoffs of $i$ and $j$ agents~\cite{Santos:2008}. We stress that under this update rule which contains stochastic elements and utilizes microscopic dynamics, the macroscopic replicator equation on complete graphs can be obtained~\cite{Santos:2008}. We use the pairwise-comparison update rule as specified in \eqref{eq4} for the prisoner's dilemma on other types of networks including regular, scale-free~\cite{Barab:1999}, random~\cite{Erd:1959}, and small-world ~\cite{Watts:1998} networks.


	\section{Optimally combined incentive protocol}
	
	\subsection{Condition for the prevalence of cooperation}
	In this subsection, we analyze the existence and (asymptotic) stability of equilibria for the system equation obtained in  \eqref{eq8} or \eqref{eq9}, and the lemma is stated below.

		\begin{lemma}\label{lem1}
			On complete graphs or regular networks, the following statements hold:
			\begin{enumerate}[(1)]
				\item There always exist two equilibria of \eqref{eq8}  or \eqref{eq9}, namely $x^\ast=1$ and $x^\ast=0$;
				\item If $u> c$ and $x_0\in (0, 1)$, the equilibrium point $x^\ast=1$ is (asymptotically)  stable and $x^\ast=0$ is unstable;
				\item If $u< c$ and $x_0\in (0, 1)$, the equilibrium point $x^\ast=1$ is unstable and $x^\ast=0$ is (asymptotically)  stable.
			\end{enumerate}
		\end{lemma}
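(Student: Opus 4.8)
The plan is to treat the two system equations \eqref{eq8} and \eqref{eq9} uniformly by observing that both have the common form $\dot{x} = \alpha\, x(1-x)(u-c)$, where $\alpha = 1$ in the complete-graph case and $\alpha = \frac{\omega k(k-2)}{2(k-1)}$ in the regular-network case. Since $k>2$ and the selection strength satisfies $\omega>0$ (the nondegenerate regime), the coefficient $\alpha$ is strictly positive, so it merely rescales the vector field without altering any signs. This lets me dispatch both network structures with a single computation and explains why the qualitative conclusions are identical across the two topologies.

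For statement (1), I would set the right-hand side to zero. Because $\alpha(u-c)\neq 0$ whenever $u\neq c$, the factor $x(1-x)$ must vanish, which yields exactly the two roots $x^\ast=0$ and $x^\ast=1$, both lying in $[0,1]$. Hence these two equilibria exist unconditionally, regardless of the sign of $u-c$, establishing the claim.

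For statements (2) and (3), I would rely primarily on a phase-line (monotonicity) argument rather than linearization alone, since the hypothesis $x_0\in(0,1)$ calls for a global conclusion on the whole open interval, not merely a local one. On $(0,1)$ the product $x(1-x)$ is strictly positive, so the sign of $\dot{x}$ coincides with the sign of $\alpha(u-c)$, i.e. with the sign of $u-c$. When $u>c$ the trajectory is strictly increasing on $(0,1)$ and, being bounded above by $1$, converges monotonically to $x^\ast=1$, making $x^\ast=1$ (asymptotically) stable and $x^\ast=0$ unstable; when $u<c$ the field is negative on $(0,1)$, the trajectory decreases monotonically to $x^\ast=0$, and the roles reverse. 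To certify the ``(asymptotically)'' part locally, I would also record the linearization $\partial F/\partial x = \alpha(u-c)(1-2x)$, which equals $\alpha(u-c)$ at $x^\ast=0$ and $-\alpha(u-c)$ at $x^\ast=1$, confirming the same sign pattern.

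I do not anticipate a genuine obstacle, as this is a standard one-dimensional stability analysis; the only points requiring care are the exclusion of the degenerate case $u=c$ (where every state becomes an equilibrium and the factorization breaks down) and the verification that $\alpha>0$, which rests on $k>2$ and $\omega>0$ so that the structured dynamics shares the same phase portrait as the well-mixed case.
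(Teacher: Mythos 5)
Your proposal is correct and follows essentially the same route as the paper's proof: identify the equilibria by factoring the right-hand side, then determine stability from the sign of $\mathrm{d}F/\mathrm{d}x$ at $x^\ast=0$ and $x^\ast=1$. The only differences are cosmetic improvements---you unify the two network cases through the common positive prefactor $\alpha$ (the paper repeats the computation verbatim for each topology) and you supplement the linearization with a phase-line monotonicity argument that justifies convergence from any $x_0\in(0,1)$, which the paper leaves implicit.
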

		
		\noindent \begin{proof}
			Let us first consider complete graphs, and the system equation is described by \eqref{eq8}. Solving $F_{I}(x, u, t)=0$, it is easy to check that Eq.~\eqref{eq8} has two equilibria, namely, $x^\ast=1$ and  $x^\ast=0$.
			Since the derivative of  $F_{I}(x, u,  t)$  with respect to $x$  is
			\begin{equation}\label{eq13}
				\begin{aligned}
					\frac{\textrm{d}F_{I}}{\textrm{d}x}=(1-2x)(u-c),
				\end{aligned}
			\end{equation}
			then at these two equilibria one yields that $\frac{\textrm{d}F_{I}}{\textrm{d}x}\big|_{x^\ast=1}=-(u-c)$ and $\frac{\textrm{d}F_{I}}{\textrm{d}x}\big|_{x^\ast=0}=u-c$.
			When $u>c$  and  $x_0\in (0, 1)$, one gets that $\frac{\textrm{d}F_{I}}{\textrm{d}x}\big|_{x^\ast=1}<0$ and $\frac{\textrm{d}F_{I}}{\textrm{d}x}\big|_{x^\ast=0}>0$ for $\forall \,\, t\geq0$. Thus,  the equilibrium point  $x^\ast=1$ is stable, and $x^\ast=0$ is unstable. On the other hand, when $u<c$  and  $x_0\in (0, 1)$, one checks that $\frac{\textrm{d}F_{I}}{\textrm{d}x}\big|_{x^\ast=1}>0$  and $\frac{\textrm{d}F_{I}}{\textrm{d}x}\big|_{x^\ast=0}<0$ for $\forall \,\, t\geq0$. Therefore, the equilibrium point   $x^\ast=1$ is unstable, and  $x^\ast=0$ is stable.
			
			Furthermore, we study the alternative case of  regular networks, and the system equation is depicted by \eqref{eq9}.
			Solving $F_{S}(x, u, t)=0$,  Eq.~\eqref{eq9} only has two equilibria, namely, $x^\ast=1$ and  $x^\ast=0$.
			Since the derivative of  $F_{S}(x, u,  t)$ with respect to $x$  is
			\begin{equation}\label{eq14}
				\begin{aligned}
					\frac{\textrm{d}F_{S}}{\textrm{d}x}=\frac{\omega k(k-2)(u-c)}{2(k-1)}(1-2x),
				\end{aligned}
			\end{equation}
			then  at these two equilibria one derives that $\frac{\textrm{d}F_{S}}{\textrm{d}x}\big|_{x^\ast=1}=-\frac{\omega k(k-2)(u-c)}{2(k-1)}$ and $\frac{\textrm{d}F_{S}}{\textrm{d}x}\big|_{x^\ast=0}=\frac{\omega k(k-2)(u-c)}{2(k-1)}$.
			When $u>c$ and $x_0\in (0, 1)$,  one gets that $\frac{\textrm{d}F_{S}}{\textrm{d}x}\mid_{x^\ast=1}<0$ and  $\frac{\textrm{d}F_{S}}{\textrm{d}x}\mid_{x^\ast=0}>0$  for $\forall \,\,t\geq0$. Thus, the equilibrium point $x^\ast=1$ is stable, and  $x^\ast=0$ is unstable.
			In contrast, when
			$u<c$ and $x_0\in (0, 1)$, one checks that $\frac{\textrm{d}F_{S}}{\textrm{d}x}\big|_{x^\ast=1}>0$ and $\frac{\textrm{d}F_{S}}{\textrm{d}x}\big|_{x^\ast=0}<0$ for $\forall \,\, t\geq0$. Therefore, the equilibrium point
			$x^\ast=1$ is unstable, and $x^\ast=0$ is stable.
		\end{proof}
		Lemma \ref{lem1} illustrates that the governing system equation with the combined incentive has two equilibrium points  on both complete graphs and regular networks, which are respectively the full defection state and the full cooperation state. For each given network structure, we find that  cooperation is more abundant than defection, if and only if $u>c$.  Nevertheless, providing the combined incentive is costly, and thus under the condition of $u>c$, it is desirable to find an incentive-based control policy, which ensures the establishment of cooperation at a lower cost.  This will be discussed in the next subsection.
		
		

		\subsection{Optimally combined incentive protocol with minimal cost}
		
		Based on the results presented above, we study the consequence of time-dependent combined incentive. We show  the optimally combined incentive protocol by solving the optimal control problem defined in \eqref{eq10}  or \eqref{eq11}. Then, we analyze the amount of cumulative cost induced by the optimally combined incentive for the dynamical system to reach the expected terminal state from the initial state. The related results of complete graphs are presented in Theorem \ref{thm1} and those of regular networks in Theorem \ref{thm2}.
		
		\begin{theorem}\label{thm1}
			On complete graphs,  the optimally combined  incentive protocol for the optimal control problem \eqref{eq10} is
			\begin{equation}\label{eq15}
				\begin{aligned}
					u^{\ast}=2c.
				\end{aligned}
			\end{equation}
			With the optimally combined incentive protocol, the solution of the system~\eqref{eq8} is
			\begin{equation}\label{eq16}
				\begin{aligned}
					x(t)=\frac{1}{1+\frac{1-x_{0}}{x_{0}} e^{-ct}},
				\end{aligned}
			\end{equation}
			and its cumulative cost is

			\begin{equation}\label{eq17}
				\begin{aligned}
					J_{I}^\ast&=\int^{t_{f}}_{t_0} G_{I}(x, u^{\ast}, t) \textrm{d}t\\
					&=\int^{t_{f}}_{t_0}\frac{1}{2}\Big\{ n(n-1)u^\ast \big[p x+(1-p)(1-x)\big] \Big\}^2\textrm{d}t\\
					&=\alpha_{I} p^2+2\beta_{I} p(1-p)+\gamma_{I}(1-p)^2\\
					&=\vartheta_I \Big[(2p-1)^2(x_{0}-1+\delta)+p^2\ln\Big(\frac{1-x_0}{\delta}\Big)\\
					&+(1-p)^2\ln\Big(\frac{1-\delta}{x_0}\Big)\Big],
				\end{aligned}
			\end{equation}
			where $\vartheta_I=2n^2(n-1)^2c$, $\alpha_{I}=\frac{1}{2}\int^{t_{f}}_{t_0}[n(n-1)xu^{\ast}]^{2}\textrm{d}t=\vartheta_I[x_{0}-1+\delta+\ln(\frac{1-x_0}{\delta})],$
			$\beta_{I}=\frac{1}{2}\int^{t_{f}}_{t_0}[n(n-1)u^{\ast}]^{2}x(1-x)\textrm{d}t=\vartheta_I(1-x_0-\delta),$ and
			$\gamma_{I}=\frac{1}{2}\int^{t_{f}}_{t_0}[n(n-1)(1-x)u^{\ast}]^{2}\textrm{d}t=\vartheta_I[x_{0}-1+\delta+\ln(\frac{1-\delta}{x_0})].$
		\end{theorem}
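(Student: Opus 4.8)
The plan is to attack the control problem \eqref{eq10} through the Hamilton--Jacobi--Bellman (HJB) equation, exploiting the free terminal time stipulated in Assumption \ref{assumption2}. Writing $J_I^\ast(x,t)$ for the optimal cost-to-go, abbreviating $J_x=\frac{\partial J_I^\ast}{\partial x}$, and using the free-time condition $\frac{\partial J_I^\ast}{\partial t}=0$, the HJB equation collapses to the stationary form $0=\min_u\big[G_I(x,u,t)+J_x\,F_I(x,u,t)\big]$. With the shorthand $A=[n(n-1)]^2[px+(1-p)(1-x)]^2>0$ and $B=x(1-x)$, the bracketed Hamiltonian is $\frac12 Au^2+J_xB(u-c)$, which is strictly convex in $u$. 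First I would minimize pointwise in $u$: the stationarity condition $Au+J_xB=0$ yields the feedback law $u^\ast=-J_xB/A$.

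Next I would substitute this feedback law back into the stationary HJB equation. After simplification the equation factors as $-J_xB\big(\frac{J_xB}{2A}+c\big)=0$. Discarding the trivial branch $J_xB=0$, which corresponds to no control effort and therefore cannot steer $x_0$ up to the target $1-\delta$, the remaining factor forces $J_xB=-2cA$; substituting into the feedback law gives the constant optimal protocol $u^\ast=2c$, establishing \eqref{eq15}. With $u^\ast=2c$ the state equation \eqref{eq8} reduces to the logistic ODE $\dot x=cx(1-x)$; separating variables and imposing $x(0)=x_0$ reproduces \eqref{eq16}.

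For the cost \eqref{eq17}, I would expand $[px+(1-p)(1-x)]^2=p^2x^2+2p(1-p)x(1-x)+(1-p)^2(1-x)^2$, so that $J_I^\ast$ splits into the three pieces $\alpha_I,\beta_I,\gamma_I$ multiplying $p^2$, $2p(1-p)$, $(1-p)^2$. The decisive simplification is the change of variable $\textrm{d}t=\textrm{d}x/[cx(1-x)]$ dictated by the logistic flow, which converts each time integral over $[t_0,t_f]$ into an elementary integral over $x\in[x_0,1-\delta]$; concretely $\int x^2\,\textrm{d}t$ becomes $\frac1c\int \frac{x}{1-x}\,\textrm{d}x$, $\int x(1-x)\,\textrm{d}t$ becomes $\frac1c\int \textrm{d}x$, and $\int (1-x)^2\,\textrm{d}t$ becomes $\frac1c\int \frac{1-x}{x}\,\textrm{d}x$. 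Evaluating these with $\int\frac{x}{1-x}\textrm{d}x=-x-\ln(1-x)$ and $\int\frac{1-x}{x}\textrm{d}x=\ln x-x$ delivers the stated $\alpha_I,\beta_I,\gamma_I$; finally, collecting the $(x_0-1+\delta)$ terms into the perfect square $(2p-1)^2$ yields the compact last line of \eqref{eq17}.

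The step I expect to be the main obstacle is justifying the branch selection and the claim that a \emph{constant} feedback is genuinely optimal: one must argue that the active branch $J_xB=-2cA$ is the admissible one, noting that it gives $J_x<0$, consistent with the cost-to-go decreasing as $x$ approaches the target, and that the resulting $u^\ast=2c$ together with the value function it induces satisfies both the HJB equation and the boundary datum $J_I^\ast(1-\delta)=0$, so that a verification (sufficiency) argument certifies optimality rather than mere stationarity.
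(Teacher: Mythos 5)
Your proposal follows essentially the same route as the paper's Appendix B: the stationary HJB equation under the free-terminal-time assumption, pointwise minimization in $u$ giving the feedback law, substitution back to obtain the two branches $J_x B=0$ or $J_x B=-2cA$, selection of the nonzero branch to get the constant protocol $u^{\ast}=2c$, the logistic solution, and the three-term expansion of the cost (your change of variables $\textrm{d}t=\textrm{d}x/[cx(1-x)]$ is just an explicit way of carrying out the integrations the paper states). Your remarks on branch selection and on the need for a verification argument are, if anything, more careful than the paper's own justification, so the proposal is correct and matches the paper's proof.
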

		\begin{proof}
			See Appendix B.
		\end{proof}
		
		\begin{theorem}\label{thm2}
			On regular networks,  the optimally combined  incentive protocol for the optimal control problem \eqref{eq11}  is
			\begin{equation}\label{eq18}
				\begin{aligned}
					u^{\ast}=2c,
				\end{aligned}
			\end{equation}
			With the optimally combined  incentive protocol, the solution of the system~\eqref{eq9} is
			\begin{equation}\label{eq19}
				\begin{aligned}
					x(t)=\frac{1}{1+\frac{1-x_{0}}{x_{0}} e^{-\frac{\omega k(k-2)c}{2(k-1)}t}}.
				\end{aligned}
			\end{equation}
			and its cumulative cost is
			\begin{equation}\label{eq20}
				\begin{aligned}
					J_{S}^\ast&=\int^{t_{f}}_{t_0} G_{S}(x, u^{\ast}, t) \textrm{d}t\\
					&=\int^{t_{f}}_{t_0}\frac{1}{2} \Big\{nku^\ast \big[px+(1-p)(1-x)\big] \Big\}^2\textrm{d}t\\
					&=\alpha_{S} p^2+2\beta_{S} p(1-p)+\gamma_{S}(1-p)^2\\
					&=\vartheta_{S} \Big[(2p-1)^2(x_{0}-1+\delta)+p^2\ln\Big(\frac{1-x_0}{\delta}\Big)\\
					&+(1-p)^2\ln\Big(\frac{1-\delta}{x_0}\Big)\Big],
				\end{aligned}
			\end{equation}
			where $\vartheta_S=\frac{4n^2k(k-1)c}{\omega(k-2)}$,
			$\alpha_S=\frac{1}{2}\int^{t_{f}}_{t_0}(knxu^{\ast})^{2}\textrm{d}t=\vartheta_S[x_{0}-1+\delta+\ln(\frac{1-x_0}{\delta})],$
			$\beta_S=\frac{1}{2}\int^{t_{f}}_{t_0}(nku^{\ast})^{2}x(1-x)\textrm{d}t=\vartheta_S(1-x_0-\delta),$ and
			$\gamma_S=\frac{1}{2}\int^{t_{f}}_{t_0}[kn(1-x)u^{\ast}]^{2}\textrm{d}t=\vartheta_S[x_{0}-1+\delta+\ln(\frac{1-\delta}{x_0})].$
		\end{theorem}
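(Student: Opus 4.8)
The plan is to solve the optimal control problem \eqref{eq11} by the Hamilton-Jacobi-Bellman (HJB) equation, paralleling the complete-graph argument behind Theorem \ref{thm1}. Writing $J_S^\ast(x,t)$ for the optimal cost-to-go, the HJB equation is $-\frac{\partial J_S^\ast}{\partial t}=\min_u\big[G_S(x,u,t)+\frac{\partial J_S^\ast}{\partial x}F_S(x,u,t)\big]$. By Assumption \ref{assumption2} the terminal time is free, so $\frac{\partial J_S^\ast}{\partial t}=0$ and the equation collapses to $\min_u H=0$ with $H:=G_S+\frac{\partial J_S^\ast}{\partial x}F_S$. Abbreviating $A=px+(1-p)(1-x)$ and $B=\frac{\omega k(k-2)}{2(k-1)}$, so that $F_S=Bx(1-x)(u-c)$ and $G_S=\frac12(nk)^2u^2A^2$, I would first minimize $H$ over $u$: since $\frac{\partial^2 H}{\partial u^2}=(nk)^2A^2>0$, the stationarity condition $\frac{\partial H}{\partial u}=(nk)^2uA^2+\frac{\partial J_S^\ast}{\partial x}Bx(1-x)=0$ pins down the unique minimizer.

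The crux is to eliminate the unknown costate $\frac{\partial J_S^\ast}{\partial x}$. From the stationarity condition, $\frac{\partial J_S^\ast}{\partial x}Bx(1-x)=-(nk)^2u^\ast A^2$; inserting this into $H=\frac12(nk)^2(u^\ast)^2A^2+\frac{\partial J_S^\ast}{\partial x}Bx(1-x)(u^\ast-c)=0$ cancels the factor $(nk)^2A^2$ and reduces everything to $u^\ast\big(c-\tfrac12 u^\ast\big)=0$. Discarding the trivial root $u^\ast=0$ gives $u^\ast=2c$, which is independent of $x$ and $t$, hence time-invariant, and admissible since $u^\ast=2c>c$ is exactly the prevalence condition of Lemma \ref{lem1}; this proves \eqref{eq18}. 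I expect this cancellation to be the one genuinely non-obvious step, as it is what forces the optimal incentive to be a state-independent constant rather than a feedback law in $x$.

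With $u^\ast=2c$ the controlled dynamics \eqref{eq9} reduce to the logistic equation $\dot x=Bc\,x(1-x)$; separating variables and imposing $x(0)=x_0$ yields \eqref{eq19} with exponent $Bc=\frac{\omega k(k-2)c}{2(k-1)}$. For the cost \eqref{eq20}, I would change the integration variable from $t$ to $x$ via $\mathrm{d}t=\frac{\mathrm{d}x}{Bc\,x(1-x)}$ with limits $x_0\to 1-\delta$, and expand $A^2=p^2x^2+2p(1-p)x(1-x)+(1-p)^2(1-x)^2$. This splits $J_S^\ast$ into $\alpha_S,\beta_S,\gamma_S$, whose integrands collapse to $\frac{x}{1-x}$, $1$, and $\frac{1-x}{x}$ respectively; each integral is elementary and the shared prefactor assembles to $\vartheta_S=\frac{4n^2k(k-1)c}{\omega(k-2)}$. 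Finally, using $1-x_0-\delta=-(x_0-1+\delta)$, the combination $\alpha_Sp^2+2\beta_Sp(1-p)+\gamma_S(1-p)^2$ groups the non-logarithmic terms into $(2p-1)^2(x_0-1+\delta)$, giving the closed form asserted in \eqref{eq20}.
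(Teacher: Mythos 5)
Your proposal is correct and follows essentially the same route as the paper's Appendix C: the HJB equation with $\partial J_S^\ast/\partial t=0$ from the free terminal time, stationarity in $u$, elimination of the costate to force $u^\ast=2c$, and then the logistic solution and the split of the cost into $\alpha_S,\beta_S,\gamma_S$. The only cosmetic differences are that you cancel $\partial J_S^\ast/\partial x$ directly to get the quadratic $u^\ast(c-\tfrac12 u^\ast)=0$ where the paper first solves for the (negative) costate and substitutes back, and that you evaluate the cost integrals by the substitution $\mathrm{d}t=\mathrm{d}x/[Bc\,x(1-x)]$ rather than via the explicit $t_f$ and $x(t)$; both yield identical results.
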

		
		\begin{proof}
			See Appendix C.
		\end{proof}

		Besides, let $p=1$ or $p=0$, and accordingly we
		give two remarks to elaborate on the special case of reward  and  punishment considered in isolation.
		\begin{Remark}\label{Remark3}
			When reward is used by the institution (i.e.,  $p=1$),  the optimal rewarding protocol is  $u^{\ast}_R=2c$, and the amount of cumulative cost is $J^{\ast}_{R, v}=\alpha_v,$ where $v\in\{I, S\}$.
		\end{Remark}
		
		\begin{Remark}\label{Remark4}
			When punishment is used by the institution (i.e.,  $p=0$),  the optimal punishing protocol is  $u^{\ast}_{P}=2c$, and the amount of cumulative cost is $J_{P, v}^\ast=\gamma_v,$ where $v\in\{I, S\}$.
		\end{Remark}
		
		Combining Theorem~\ref{thm1}, Theorem~\ref{thm2}, Remark~\ref{Remark3} and Remark~\ref{Remark4},   one knows that the obtained optimal incentive protocols for cooperation are $u^\ast=2c$, which are time-invariant and identical for both  complete graphs and regular networks.  When the network structure is determined, the optimally combined incentive  protocol can make the dynamical system converge to the full cooperation state (i.e., $x=1$) for all $p\in[0, 1]$.
		However,  it can be observed that the requested cumulative  cost is determined by the rewarding preference $p$,   the initial cooperation level $x_0$, and the difference $\delta$ between the full cooperation state (i.e., $x=1$) and the desired cooperation state (i.e., $x=1-\delta$).  To this end,
		we will provide comparative analysis between optimal reward and punishment, and among optimally combined incentives for all $p\in[0, 1]$ in the next section, respectively.

		\section{Comparison among optimally combined  incentives}

		In this section, we now proceed to discussing the optimally combined  incentives. We aim to determine a $p^\ast$-value such that the corresponding cumulative cost of the combined incentive $J_{v}^\ast(p^\ast)$ is lowest for $\forall p\in[0, 1]$, that is,  $\mathop{\min}\limits_{p\in [0,1]}J_{v}^\ast(p)=J_{v}^\ast(p^\ast)$ in three scenarios regarding the relation of $x_{0}$ and $\delta$, which are respectively $x_{0}=\delta$, $x_{0}>\delta$, and $x_{0}<\delta$.
		Before comparison,  we first determine the specific expression of the range $\mathcal{Z}=\big\{(x_{0}, \delta)\big| x_{0}+\delta<1,  0< x_{0}<1,  0<\delta<1\big\}$ for each given scenario.
		
		
		\begin{Remark}\label{Remark5}
			The expression of the range $\mathcal{Z}$ is:
			\begin{enumerate}[(1)]
				\item for $x_{0}=\delta$,  $\mathcal{Z}=\big\{(x_{0},  \delta)\big| x_{0}=\delta,\,  0<x_{0}< 0.5\big\}$;
				\item for $x_{0}>\delta$, $\mathcal{Z}=\big\{(x_{0}, \delta)\big| 0<\delta <0.5,  \, \delta<x_{0} < 1-\delta\big\}$;
				\item for  $x_{0}<\delta$,  $\mathcal{Z}=\big\{(x_{0}, \delta)\big| 0<x_{0} <0.5, \, x_{0} <\delta <1-x_{0}\big\}$.
			\end{enumerate}
		\end{Remark}


		For notational simplicity,  from \eqref{eq17} and \eqref{eq20} we can define the cumulative cost of the optimally combined incentive for both distinct network structures as the function
		\begin{equation}\label{eq21}
			\begin{aligned}
				J^*_v(p)&=\alpha_{v} p^2+2\beta_{v}p(1-p)+\gamma_{v}(1-p)^2\\
				&=\vartheta_{v} \Big[(2p-1)^2(x_{0}-1+\delta)+p^2\ln\Big(\frac{1-x_0}{\delta}\Big)\\
				&+(1-p)^2\ln\Big(\frac{1-\delta}{x_0}\Big)\Big],
			\end{aligned}
		\end{equation}
		where $v\in\{I, S\}$, $\alpha_v=\vartheta_v[x_{0}-1+\delta+\ln(\frac{1-x_0}{\delta})]$,
		$\beta_v=\vartheta_v(1-x_0-\delta)$, and
		$\gamma_v=\vartheta_v[x_{0}-1+\delta+\ln(\frac{1-\delta}{x_0})]$.  Therefore, taking the derivative of \eqref{eq21} with respect to $p$, we have
		\begin{equation}\label{eq22}
			\begin{aligned}
				\frac{\textrm{d}J^*_{v}(p)}{\textrm{d}p}=2\big[(\alpha_{v}-2\beta_{v}+\gamma_{v})p+\beta_{v}-\gamma_{v}\big].
			\end{aligned}
		\end{equation}
		Solving $\frac{\textrm{d}J^*_{v}}{\textrm{d}p}=0$, we have
		\begin{equation}\label{eq23}
			\begin{aligned}
				p^\ast&=\frac{\gamma_v-\beta_v}{\alpha_{v}-2\beta_{v}+\gamma_{v}}=\frac{x_0-1+\delta+\ln(\frac{1-x_0}{\delta})}{4(x_0-1+\delta)+\ln\big[\frac{(1-x_0)(1-\delta)}{x_0 \delta}\big]},
			\end{aligned}
		\end{equation}
		when $\alpha_{v}-2\beta_{v}+\gamma_{v}\neq0$.
		Furthermore, we define the cumulative cost difference between $J^\ast_{P, v}$ and $J^\ast_{R, v}$  as the function
		\begin{equation}\label{eq24}
			\begin{aligned}
				\mathcal{G}(x_0, \delta)=J^*_v(1)-J^*_v(0)=\alpha_v-\gamma_v=\vartheta_v\ln\Big[\frac{x_0(1-x_0)}{\delta(1-\delta)}\Big],
			\end{aligned}
		\end{equation}
		and further define the function $ \Theta_v(x_0, \delta)$ as
		\begin{equation}\label{eq25}
			\begin{aligned}
				\Theta_v(x_0, \delta)=x_0(1-x_0)-\delta(1-\delta)=(x_0-\delta)(1-x_0-\delta).
			\end{aligned}
		\end{equation}

		
		
		\subsection{$x_0=\delta$}
		When the initial cooperation level is equal to the difference between the full cooperation  and desired cooperation states (i.e., $x_0=\delta$),
		we have the following conclusion.
		
		\begin{lemma}\label{lem2}
			The inequality  $2(1-2x_{0})<\ln(\frac{1-x_{0}}{x_0})$ always holds, i.e., $\beta_v<\gamma_v$.
		\end{lemma}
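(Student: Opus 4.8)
The plan is to reduce the claim to an elementary single-variable monotonicity argument, after first confirming the asserted equivalence with $\beta_v < \gamma_v$. Setting $x_0 = \delta$ in the definitions $\beta_v = \vartheta_v(1 - x_0 - \delta)$ and $\gamma_v = \vartheta_v[x_0 - 1 + \delta + \ln(\frac{1-\delta}{x_0})]$ gives $\beta_v = \vartheta_v(1 - 2x_0)$ and $\gamma_v = \vartheta_v[2x_0 - 1 + \ln(\frac{1-x_0}{x_0})]$. Since $\vartheta_v > 0$, the difference $\gamma_v - \beta_v$ has the sign of $4x_0 - 2 + \ln(\frac{1-x_0}{x_0})$, so the inequality $\beta_v < \gamma_v$ is exactly $2(1 - 2x_0) < \ln(\frac{1-x_0}{x_0})$. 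This shows the two formulations coincide, and by Remark~\ref{Remark5}(1) the relevant range is $x_0 \in (0, 1/2)$.

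Next I would introduce the auxiliary function $g(x) = \ln(\frac{1-x}{x}) - 2(1 - 2x)$ on $(0, 1/2)$ and aim to show $g(x) > 0$ there. The crucial observation is the boundary value $g(1/2) = \ln 1 - 0 = 0$, so it suffices to prove that $g$ is strictly decreasing on $(0, 1/2)$, whence $g(x) > g(1/2) = 0$ for every interior point. Differentiating and combining over the common denominator $x(1-x)$ yields $g'(x) = -\frac{1}{1-x} - \frac{1}{x} + 4 = 4 - \frac{1}{x(1-x)}$.

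The decisive step is the elementary bound $x(1-x) \le \frac{1}{4}$ for all $x$, with equality only at $x = 1/2$ (by completing the square, or AM--GM). This gives $\frac{1}{x(1-x)} \ge 4$ and hence $g'(x) < 0$ strictly throughout $(0, 1/2)$. Therefore $g$ is strictly decreasing on $(0, 1/2)$, and together with $g(1/2) = 0$ we conclude $g(x) > 0$, i.e., $2(1 - 2x_0) < \ln(\frac{1-x_0}{x_0})$, for every $x_0 \in (0, 1/2)$, which is the assertion.

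I do not anticipate a genuine obstacle here. The only subtlety worth flagging is that the inequality degenerates to an \emph{equality} precisely at the excluded endpoint $x_0 = 1/2$, so the argument must rest on strict monotonicity up to that endpoint rather than on any pointwise estimate. Once the derivative is written in the factored form $4 - 1/[x(1-x)]$, the quadratic bound on $x(1-x)$ closes the argument immediately.
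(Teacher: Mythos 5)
Your proposal is correct and follows essentially the same route as the paper: both reduce to the single-variable function $\pm\bigl[2(1-2x_{0})-\ln\bigl(\tfrac{1-x_{0}}{x_{0}}\bigr)\bigr]$ on $(0,0.5)$, compute its derivative (your $4-\tfrac{1}{x(1-x)}$ is just the negative of the paper's factored form $\tfrac{(2x_{0}-1)^{2}}{x_{0}(1-x_{0})}$), and conclude by strict monotonicity together with the vanishing boundary value at $x_{0}=\tfrac12$. The only difference is cosmetic sign conventions, so no further comparison is needed.
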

		\begin{proof}
			According to Remark \ref{Remark5}, one  can obviously  check $x_{0}\in(0, 0.5)$ since $x_0=\delta$.
			Define the function $g_v(x_{0})$ as
			\begin{equation}\label{eq26}
				\begin{aligned}
					g_{v}(x_{0})=\beta_v-\gamma_v=\vartheta_v\Big[2(1-2x_{0})-\ln\Big(\frac{1-x_{0}}{x_0}\Big)\Big].
				\end{aligned}
			\end{equation}
			Its derivative with respect to  $x_{0}$ is
			$
			\frac{\textrm{d}g_{v}(x_{0})}{\textrm{d}x_{0}}=\vartheta_v \frac{(2x_{0}-1)^2}{x_0(1-x_0)}\textgreater0
			$
			over $(0, 0.5)$, indicating that $g_v(x_{0})$ is monotonically increasing over  $(0, 0.5)$. Furthermore, one can check that
			\begin{equation}\label{eq27}
				\begin{aligned}
					g_{v}(x_{0})<\mathop{\sup}_{x_{0}\in(0, 0.5)}g_{v}(x_{0})=g_{v}(x_{0})\big|_{x_{0}=0.5}=0.
				\end{aligned}
			\end{equation} Thus, $\beta_v<\gamma_v$ holds, which is equivalent to $2(1-2x_{0})<\ln(\frac{1-x_{0}}{x_0})$.
		\end{proof}
		
		\subsubsection{The independent usage of optimal reward or punishment}
		We firstly consider the special case where reward (i.e., $p=1$) and punishment (i.e., $p=0$) are independently used, and discuss both of them.
		\begin{lemma}\label{lem3}
			If $p=1$ or $p=0$,  both cumulative costs for the optimal rewarding and punishing incentives are identical, that is,  $J_{R, v}^\ast=J_{P, v}^\ast$ (i.e., $\alpha_v=\gamma_v$).
		\end{lemma}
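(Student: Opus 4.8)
The plan is to reduce the claimed equality $J_{R,v}^\ast = J_{P,v}^\ast$ to the single scalar identity $\alpha_v = \gamma_v$, and then to exploit the closed-form cost difference already recorded in \eqref{eq24}. By Remark~\ref{Remark3} the cumulative cost of pure reward is $J_{R,v}^\ast = \alpha_v$, while by Remark~\ref{Remark4} the cumulative cost of pure punishment is $J_{P,v}^\ast = \gamma_v$. Hence the assertion is equivalent to showing that the difference $\mathcal{G}(x_0,\delta) = \alpha_v - \gamma_v$ vanishes throughout the regime $x_0 = \delta$.

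First I would invoke \eqref{eq24}, which supplies the explicit form
\begin{equation*}
\mathcal{G}(x_0,\delta) = \vartheta_v \ln\Big[\frac{x_0(1-x_0)}{\delta(1-\delta)}\Big].
\end{equation*}
Setting $\delta = x_0$ makes the argument of the logarithm equal to $1$, so $\mathcal{G}(x_0, x_0) = \vartheta_v \ln 1 = 0$, which yields $\alpha_v = \gamma_v$ and therefore $J_{R,v}^\ast = J_{P,v}^\ast$. Equivalently, one may substitute $\delta = x_0$ directly into the defining expressions $\alpha_v = \vartheta_v[x_0 - 1 + \delta + \ln(\frac{1-x_0}{\delta})]$ and $\gamma_v = \vartheta_v[x_0 - 1 + \delta + \ln(\frac{1-\delta}{x_0})]$; both collapse to $\vartheta_v[2x_0 - 1 + \ln(\frac{1-x_0}{x_0})]$, giving the same conclusion without appealing to \eqref{eq24}.

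There is no genuine obstacle here: the statement is essentially a symmetry observation, namely that the map $(x_0,\delta)\mapsto \mathcal{G}$ is antisymmetric under the interchange of $x_0$ and $\delta$ and hence must vanish on the diagonal $x_0 = \delta$. The only point that warrants a word of care is confirming that $\vartheta_v \neq 0$, so that $\alpha_v - \gamma_v = 0$ follows from $\ln 1 = 0$ rather than from a degenerate prefactor; this is immediate since $\vartheta_v$ is a strictly positive constant, being $2n^2(n-1)^2 c$ for $v=I$ and $\frac{4n^2 k(k-1)c}{\omega(k-2)}$ for $v=S$, with $c>0$, $k>2$ and $\omega>0$. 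I would also remark in passing that the range restriction from Remark~\ref{Remark5}, giving $x_0 \in (0,0.5)$ when $x_0 = \delta$, keeps both logarithmic arguments well defined and positive, so the manipulation is valid on the entire parameter region under consideration.
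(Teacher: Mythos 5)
Your proposal is correct and follows essentially the same route as the paper: both reduce the claim to $\mathcal{G}(x_0,\delta)=\alpha_v-\gamma_v=0$ via \eqref{eq24} (the paper phrases the vanishing through the factored form $\Theta_v(x_0,\delta)=(x_0-\delta)(1-x_0-\delta)=0$ in \eqref{eq25}, you by setting the logarithm's argument to $1$, which is the same computation). Your added checks on $\vartheta_v>0$ and the domain from Remark~\ref{Remark5} are harmless but not needed, since $\mathcal{G}=\alpha_v-\gamma_v$ by definition.
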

		\begin{proof}
			Since $x_0=\delta$,  it follows from \eqref{eq24} and \eqref{eq25} that $ \Theta_v(x_0, \delta)=0$, or equivalently,  $\mathcal{G}(x_0, \delta)=0 $. Therefore, $J^*_v(1)=J^*_v(0)$  (i.e., $J_{R, v}^\ast=J_{P, v}^\ast$ and $\alpha_v=\gamma_v$).
		\end{proof}
		
		\subsubsection{Optimally combined incentive}
		Then, we study the optimally combined incentive with minimal cumulative cost for all $p \in[0, 1]$. Now the theorem is stated below.
		\begin{theorem}\label{thm3}
			If $p^*=0.5$, the cumulative cost of the combined incentive is minimized for $\forall  p\in[0,1]$, that is, $ \min\limits_{p\in [0,1]}J_{v}^\ast(p)=J_{v}^\ast(p^*)$.
		\end{theorem}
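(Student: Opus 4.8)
The plan is to exploit that $J^*_v(p)$ in \eqref{eq21} is nothing but a univariate quadratic in the scalar $p$, so minimizing it over $[0,1]$ reduces to locating its vertex and verifying a single sign condition. First I would rewrite \eqref{eq21} in standard polynomial form $J^*_v(p) = (\alpha_v - 2\beta_v + \gamma_v)\,p^2 + 2(\beta_v - \gamma_v)\,p + \gamma_v$, which exposes both the leading coefficient and the stationary point already recorded in \eqref{eq23}. This turns the claim into a routine one-variable convex-optimization check.

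The decisive step is to specialize to the hypothesis $x_0 = \delta$ of this subsection. Under that assumption Lemma \ref{lem3} gives $\alpha_v = \gamma_v$, so the denominator of \eqref{eq23} collapses to $\alpha_v - 2\beta_v + \gamma_v = 2(\gamma_v - \beta_v)$ while the numerator is $\gamma_v - \beta_v$. Hence the unique stationary point is
\[
p^* = \frac{\gamma_v - \beta_v}{2(\gamma_v - \beta_v)} = \frac{1}{2},
\]
provided $\gamma_v \neq \beta_v$. That this proviso holds — and is exactly what makes the division legitimate — is the content of Lemma \ref{lem2}, which asserts $\beta_v < \gamma_v$. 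Note also that $p^* = 1/2$ lies in the admissible interval $[0,1]$, so no boundary analysis is needed.

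It remains to certify that this stationary point is a genuine global minimum rather than a maximum or inflection. For this I would invoke the second-order condition: the coefficient of $p^2$ equals $\alpha_v - 2\beta_v + \gamma_v = 2(\gamma_v - \beta_v)$, which Lemma \ref{lem2} again guarantees is strictly positive. Consequently $J^*_v(p)$ is a strictly convex parabola, so its vertex $p^* = 1/2$ is its unique global minimizer over all of $\mathbb{R}$, and a fortiori over $[0,1]$. This yields $\min_{p\in[0,1]} J^*_v(p) = J^*_v(1/2)$, as claimed. There is no real analytical obstacle here: both nontrivial inputs — the equality $\alpha_v = \gamma_v$ and the strict inequality $\beta_v < \gamma_v$ — are already supplied by Lemmas \ref{lem3} and \ref{lem2}, and the only point requiring mild care is applying \eqref{eq23} solely when its denominator is nonzero, which $\beta_v < \gamma_v$ precisely ensures.
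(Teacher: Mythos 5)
Your proposal is correct and follows essentially the same route as the paper: both use Lemma \ref{lem3} ($\alpha_v=\gamma_v$) to locate the stationary point at $p^*=1/2$ and Lemma \ref{lem2} ($\beta_v<\gamma_v$) to certify it is the minimum, the only cosmetic difference being that you phrase the second step as strict convexity of the quadratic while the paper reads off the sign of $\frac{\textrm{d}J^*_v}{\textrm{d}p}=2(\gamma_v-\beta_v)(2p-1)$.
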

		\begin{proof} Since $\alpha_v=\gamma_v$ in Lemma  \ref{lem3}, the derivative of the function $J_v^\ast(p)$  with respect to $p$ defined in \eqref{eq22} is
			$
			\frac{\textrm{d}J^*_{v}}{\textrm{d}p}=2(\gamma_v-\beta_v)(2p-1).
			$
			Solving $\frac{\textrm{d}J^*_{v}(p)}{\textrm{d}p}=0$, we have $p^*=0.5$.
			Since $\beta_v<\gamma_v$ in Lemma \ref{lem2},  then the minimum value of $J_v^\ast(p)$ is taken at $p^*=0.5$ over the internal $[0, 1]$. That is, for $\forall  p\in[0, 1]$, $J_v^\ast(p^*) \leq J_v^\ast(p)$ and $\mathop{\min}\limits_{p\in [0,1]}J_{v}^\ast(p)=J_{v}^\ast(p^*)$.
		\end{proof}
		
		

		By summarizing Lemma  \ref{lem3} and Theorem \ref{thm3} presented above, one can conclude that under the condition of $x_0=\delta$,  when optimal reward and punishment are used  independently, applying optimal rewarding incentive ($p=1$) always induces the same cumulative cost as the optimal punishing incentive ($p=0$), which is consistent with the obtained result in Ref. \cite{Wang2022JRCS}. In contrast, when the optimally combined incentive is considered for any $p\in[0, 1]$, we  can always find a $p^*$ such that the cumulative cost  $J_v^\ast(p)$ of the  optimally combined incentive is lowest over $[0, 1]$. In addition, these theoretical results remain valid for both complete graphs and regular networks.

		\subsection{$x_0>\delta$}
		In the case where the initial cooperation level exceeds the difference between the full cooperation and desired cooperation states (i.e., $x_0>\delta$), we start with studying the relation between $\alpha_v$ and $\beta_v$.
		\begin{lemma}\label{lem4}
			The inequality  $2(1-x_{0}-\delta)<\ln(\frac{1-x_{0}}{\delta})$ holds, that is,  $\alpha_v>\beta_v$.
		\end{lemma}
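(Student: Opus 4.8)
The plan is to reduce the claim $\alpha_v>\beta_v$ to a single-variable optimization. Using the stated formulas $\alpha_v=\vartheta_v[x_0-1+\delta+\ln(\frac{1-x_0}{\delta})]$ and $\beta_v=\vartheta_v(1-x_0-\delta)$, a direct subtraction gives $\alpha_v-\beta_v=\vartheta_v\big[\ln(\frac{1-x_0}{\delta})-2(1-x_0-\delta)\big]$. Since $\vartheta_v>0$, the lemma is equivalent to showing that the function $F(x_0,\delta):=\ln(\frac{1-x_0}{\delta})-2(1-x_0-\delta)$ is strictly positive on the region $\mathcal{Z}=\{\,0<\delta<0.5,\ \delta<x_0<1-\delta\,\}$ from Remark~\ref{Remark5}(2).

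First I would fix $\delta$ and regard $F$ as a function of $x_0$ alone. Differentiating, $\frac{\partial F}{\partial x_0}=2-\frac{1}{1-x_0}=\frac{1-2x_0}{1-x_0}$, which is positive for $x_0<0.5$ and negative for $x_0>0.5$. Hence, for each admissible $\delta$, $F(\cdot,\delta)$ is unimodal on $(\delta,1-\delta)$: it increases up to the peak at $x_0=0.5$ and decreases afterward. Consequently the infimum of $F$ over the open interval is governed by its two boundary limits, and it suffices to examine $F$ as $x_0\to\delta^{+}$ and as $x_0\to(1-\delta)^{-}$.

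Next I would evaluate these two boundary values. At $x_0=1-\delta$ one gets $F(1-\delta,\delta)=\ln(\frac{\delta}{\delta})-2\cdot 0=0$, and at $x_0=\delta$ one gets $F(\delta,\delta)=\ln(\frac{1-\delta}{\delta})-2(1-2\delta)$, which is exactly the quantity shown to be positive in Lemma~\ref{lem2} (applied with $\delta$ in the role of $x_0$, valid since $\delta\in(0,0.5)$). By unimodality, on the increasing branch $F$ exceeds its left-endpoint limit $F(\delta,\delta)>0$, and on the decreasing branch $F$ stays strictly above its right-endpoint limit $0$; since the interval is open, the value $0$ is never attained. Therefore $F>0$ throughout $\mathcal{Z}$, which yields $\alpha_v>\beta_v$.

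The main obstacle I anticipate is the right endpoint, where $F$ vanishes exactly: a crude estimate would give only $F\ge 0$, so the argument must exploit both the unimodal shape and the openness of the admissible interval to upgrade this to the strict inequality. Reusing Lemma~\ref{lem2} at the left endpoint is the other point requiring care, since it must be invoked under the substitution $x_0\mapsto\delta$ rather than applied verbatim.
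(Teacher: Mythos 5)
Your proof is correct and is essentially the paper's own argument in a different dress: your derivative $\partial F/\partial x_{0}=\frac{1-2x_{0}}{1-x_{0}}$ is exactly the paper's unimodality of $\Delta_v(x)=\ln(x)-2x$ evaluated at $1-x_{0}$, and your two endpoint checks ($F=0$ at $x_{0}=1-\delta$, and $F(\delta,\delta)>0$ via Lemma~\ref{lem2} with $x_{0}\mapsto\delta$) correspond to the paper's comparison $\Delta_v(\delta)<\Delta_v(1-\delta)$ proved through the auxiliary function $\mathcal{T}_v$. The only difference is organizational: you cite Lemma~\ref{lem2} where the paper re-derives the equivalent inequality, and your branch-by-branch handling of the open interval cleanly delivers the strict inequality.
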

		\begin{proof}
			According to Remark \ref{Remark5}, one  can obtain that $\delta \in(0,0.5)$ and  $\delta<1-x_{0}<1-\delta$ since $x_0>\delta$.
			To illustrate that  $\alpha_v>\beta_v$ holds, we define the function $ \mathcal{F}_v(x_{0}, \delta)$ as
			\begin{equation}\label{eq28}
				\begin{aligned}
					\mathcal{F}_v(x_{0}, \delta)&=\alpha_v-\beta_v\\
					&=\vartheta_v\Big[\ln\Big(\frac{1-x_{0}}{\delta}\Big)-2(1-x_{0}-\delta)\Big]\\
					&=\vartheta_v\Big\{\big[\ln(1-x_{0})-2(1-x_{0})\big]-(\ln\delta-2\delta)\Big\}.
				\end{aligned}
			\end{equation}
			Intuitively, to prove the function $\mathcal{F}_v(x_{0}, \delta)>0$, we define the function $\Delta_v(x)$ as
			\begin{equation}\label{eq29}
				\begin{aligned}
					\Delta_v(x)=\ln(x)-2x,
				\end{aligned}
			\end{equation}
			whose derivative with respect to $x$  satisfies $\frac{\textrm{d}\Delta_{v}(x)}{\textrm{d}x}=\frac{1-2x}{x}$.
			From this, one can see that  the function $\Delta_v(x)$ is monotonically increasing for $\forall x\in(0, 0.5)$, while monotonically decreasing for $\forall x\in(0.5, 1)$.
			
			Besides,  we define the function $\mathcal{T}_v(\delta)$ as
			\begin{equation}\label{eq30}
				\begin{aligned}
					\mathcal{T}_v(\delta)=\Delta_v(\delta)-\Delta_v(1-\delta)=\ln \delta-\ln(1- \delta)+2-4\delta,
				\end{aligned}
			\end{equation}
			and its derivative  with respect to  $\delta$ is
			$
			\frac{\textrm{d}\mathcal{T}_v(\delta)}{\textrm{d}\delta}=\frac{(2\delta-1)^2}{\delta(1-\delta)}\geq0
			$
			for any $\delta\in(0, 0.5)$.  Therefore, the function $ \mathcal{T}_v(\delta)$ is monotonically increasing over the interval $(0, 0.5)$, and one can get
			\begin{equation}\label{eq31}
				\begin{aligned}
					\mathcal{T}_v(\delta)<\mathop{\sup}_{\delta\in(0, 0.5)} \mathcal{T}_v(\delta)=\mathcal{T}(\delta)\big|_{\delta=0.5}=0.
				\end{aligned}
			\end{equation}
			Thus,   it follows from (\ref{eq30}) that $\Delta_v(\delta)<\Delta_v(1-\delta)$, that is,  $\ln \delta-2\delta<\ln(1-\delta)-2(1-\delta)$.  Due to the fact that  $\delta<1-x_{0}<1-\delta$ and $\Delta_v(x)$ is monotonically increasing in $(0, 0.5)$ and  monotonically decreasing in $(0.5, 1)$, one can get that $\mathop{\max}\big\{\Delta_v(\delta), \Delta_v(1-\delta)\big\}<\Delta_v(1-x_{0})$.  From \eqref{eq28}, one  further yields that  $\mathcal{F}_v(x_{0}, \delta)=\vartheta_v[\Delta_v(1-x_{0})-\Delta_v(\delta)]>0$ and $\alpha_v>\beta_v$.
		\end{proof}
		
		\subsubsection{The independent usage of optimal reward or punishment} We compare optimal rewarding incentive with optimal punishing incentive, and the related result is presented in Lemma \ref{lem5}.
		\begin{lemma}\label{lem5}
			If $p=0$,  the requested cumulative cost of optimal punishing incentive is lower than that of  optimal rewarding incentive, that is,  $J_{R, v}^\ast>J_{P, v}^\ast$ (i.e., $\alpha_v>\gamma_v$).
		\end{lemma}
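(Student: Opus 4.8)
The plan is to reduce the comparison $J_{R,v}^\ast > J_{P,v}^\ast$ to a sign question about the auxiliary functions already introduced in the excerpt. By Remark~\ref{Remark3} and Remark~\ref{Remark4} we have $J_{R,v}^\ast = \alpha_v$ and $J_{P,v}^\ast = \gamma_v$, so the assertion is exactly $\alpha_v > \gamma_v$. First I would recall the difference function $\mathcal{G}(x_0,\delta) = \alpha_v - \gamma_v$ defined in \eqref{eq24}, which the excerpt already evaluates to $\vartheta_v \ln\!\big[\tfrac{x_0(1-x_0)}{\delta(1-\delta)}\big]$. Since $\vartheta_v > 0$ for both $v \in \{I,S\}$ (it equals $2n^2(n-1)^2 c$ or $\tfrac{4 n^2 k (k-1) c}{\omega (k-2)}$, both strictly positive under the standing assumptions $0 < c < b$, $k > 2$ and $\omega > 0$), proving $\alpha_v > \gamma_v$ amounts to showing the logarithm is strictly positive, i.e.\ $\frac{x_0(1-x_0)}{\delta(1-\delta)} > 1$, or equivalently $x_0(1-x_0) > \delta(1-\delta)$.

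The second step is to recognize this last inequality as the statement $\Theta_v(x_0,\delta) > 0$, where $\Theta_v$ is the function defined in \eqref{eq25} together with its factorization $\Theta_v(x_0,\delta) = (x_0 - \delta)(1 - x_0 - \delta)$. Everything then hinges on the signs of the two factors, and here I would invoke Remark~\ref{Remark5}(2): in the regime $x_0 > \delta$ the admissible range is $\mathcal{Z} = \{(x_0,\delta)\mid 0 < \delta < 0.5,\ \delta < x_0 < 1-\delta\}$. From $x_0 > \delta$ we obtain $x_0 - \delta > 0$, and from $x_0 < 1 - \delta$ we obtain $1 - x_0 - \delta > 0$; hence both factors are strictly positive and $\Theta_v(x_0,\delta) > 0$.

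Chaining the two steps gives $\mathcal{G}(x_0,\delta) = \vartheta_v \ln\!\big[\tfrac{x_0(1-x_0)}{\delta(1-\delta)}\big] > 0$, that is, $\alpha_v - \gamma_v > 0$, which is precisely $J_{R,v}^\ast > J_{P,v}^\ast$. I do not expect any genuine obstacle: the whole argument is a sign analysis that reuses the closed form of $\mathcal{G}$ in \eqref{eq24} and the factorization of $\Theta_v$ in \eqref{eq25}, so the only point requiring care is reading off the domain constraints from Remark~\ref{Remark5} correctly to guarantee both factors of $\Theta_v$ are positive. Since this reasoning never distinguishes the two network types, it settles the complete-graph and regular-network cases simultaneously, uniformly in $v$.
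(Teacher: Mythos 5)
Your proposal is correct and follows essentially the same route as the paper: it reduces the claim to the sign of $\mathcal{G}(x_0,\delta)=\alpha_v-\gamma_v$ via \eqref{eq24}, and then establishes $\Theta_v(x_0,\delta)>0$ from the factorization $(x_0-\delta)(1-x_0-\delta)$ under the constraints of Remark~\ref{Remark5}. Your write-up is in fact slightly more careful than the paper's, which asserts the sign of $\Theta_v$ "depends on $x_0-\delta$" without explicitly noting that the second factor $1-x_0-\delta$ is positive on $\mathcal{Z}$.
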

		\begin{proof}
			From \eqref{eq24} and \eqref{eq25}, one observes that  the sign of $ \Theta_v(x_0, \delta)$  depends on the difference $x_0-\delta$.  Since $x_0>\delta$, we have $ \Theta_v(x_0, \delta)>0$, or equivalently,  $\mathcal{G}(x_0, \delta)>0$. Therefore, $J^*_v(1)>J^*_v(0)$ (i.e., $J_{R, v}^\ast>J_{P, v}^\ast$ and $\alpha_v>\gamma_v$).
		\end{proof}
		
		\subsubsection{Optimally combined incentive} The optimally combined incentive with minimal cost for $\forall p \in[0, 1]$ is analyzed under the conditions of $2(1-x_{0}-\delta)<\ln(\frac{1-\delta}{x_{0}})$ and $2(1-x_{0}-\delta)\geq\ln(\frac{1-\delta}{x_{0}})$ in the following theorem.
		\begin{theorem} \label{thm4}
			The following statements hold:
			\begin{enumerate}[(1)]
				\item  When $2(1-x_{0}-\delta)<\ln(\frac{1-\delta}{x_{0}})$ (i.e.,  $\beta_v<\gamma_v$),  if $p^*=\frac{\gamma_v-\beta_v}{\alpha_{v}-2\beta_{v}+\gamma_{v}}=\frac{x_0-1+\delta+\ln(\frac{1-x_0}{\delta})}{4(x_0-1+\delta)+\ln[\frac{(1-x_0)(1-\delta)}{x_0 \delta}]}$, then  the requested cumulative cost $J_v^\ast(p)$ of the optimally combined incentive is minimized for $\forall  p\in[0, 1]$,  that is, $\min\limits_{p\in [0, 1]}J_v^\ast(p)=J_v^\ast(p^*)$; 
				\item When $2(1-x_{0}-\delta)\geq\ln(\frac{1-\delta}{x_{0}})$  (i.e.,  $\beta_v\geq\gamma_v$), if $p=0$,  then  the requested cumulative cost $J_v^\ast(p)$ of the optimal punishment is minimized for $\forall p\in[0, 1]$, that is, $\min\limits_{p\in [0, 1]}J_v^\ast(p)=J_v^\ast(0)$.
			\end{enumerate}
		\end{theorem}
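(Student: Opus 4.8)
The plan is to exploit the fact that, by \eqref{eq21}, the cumulative cost $J_v^\ast(p)$ is a quadratic polynomial in the single variable $p$, namely $J_v^\ast(p)=A_v p^2+2(\beta_v-\gamma_v)p+\gamma_v$ with leading coefficient $A_v=\alpha_v-2\beta_v+\gamma_v$. The whole statement then reduces to locating the minimizer of this quadratic over the compact interval $[0,1]$, and the two lemmas already in hand under the hypothesis $x_0>\delta$ — Lemma \ref{lem4} (which gives $\alpha_v>\beta_v$) and Lemma \ref{lem5} (which gives $\alpha_v>\gamma_v$) — supply exactly the inequalities needed to pin it down.

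For part (1) I would first observe that the assumption $\beta_v<\gamma_v$ together with Lemma \ref{lem4} yields $A_v=(\alpha_v-\beta_v)+(\gamma_v-\beta_v)>0$, so the parabola is strictly convex with a unique global minimizer at the stationary point $p^\ast=(\gamma_v-\beta_v)/A_v$ already computed in \eqref{eq23}. It then remains only to verify $p^\ast\in(0,1)$: positivity is immediate since numerator and denominator are both positive, while $p^\ast<1$ is equivalent to $\gamma_v-\beta_v<A_v$, i.e.\ to $\alpha_v-\beta_v>0$, which is precisely Lemma \ref{lem4}. Hence the interior critical point is the minimizer over $[0,1]$.

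For part (2) the sign of $A_v$ is no longer forced by the lemmas, so reading off the location of the vertex directly is not available; this is the step I expect to be the main obstacle, since a naive case split on the concavity of the quadratic would be needed. I would sidestep it by working instead with the difference $J_v^\ast(p)-J_v^\ast(0)=p\,[A_v p+2(\beta_v-\gamma_v)]$ and noting that the bracketed expression is \emph{affine} in $p$. An affine function is nonnegative on $[0,1]$ as soon as it is nonnegative at both endpoints, and here the value at $p=0$ is $2(\beta_v-\gamma_v)\geq 0$ by the hypothesis $\beta_v\geq\gamma_v$, while the value at $p=1$ collapses to $A_v+2(\beta_v-\gamma_v)=\alpha_v-\gamma_v$, which is strictly positive by Lemma \ref{lem5}. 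Consequently $J_v^\ast(p)\geq J_v^\ast(0)$ for every $p\in[0,1]$, so the minimum is attained at $p=0$, i.e.\ optimal punishment is the cost-minimizing scheme. The same computation applies verbatim for both $v=I$ and $v=S$, since the coefficients differ only through the common positive prefactor $\vartheta_v$.
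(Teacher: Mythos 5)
Your proof is correct and follows essentially the same route as the paper: both treat $J_v^\ast(p)$ as a quadratic in $p$ and use Lemma \ref{lem4}, Lemma \ref{lem5} and the case hypothesis to locate its minimizer on $[0,1]$ (strict convexity with an interior vertex in part (1), an endpoint minimum in part (2)). The only cosmetic difference is in part (2), where the paper shows $\frac{\textrm{d}J^*_{v}}{\textrm{d}p}=2\big[(\alpha_{v}-\beta_{v})p+(\beta_{v}-\gamma_{v})(1-p)\big]\geq 0$ using Lemma \ref{lem4}, whereas you factor $J_v^\ast(p)-J_v^\ast(0)=p\big[A_v p+2(\beta_v-\gamma_v)\big]$ and check the affine bracket at the endpoints using Lemma \ref{lem5}; both arguments are valid.
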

		\begin{proof}
			We first consider the case of $2(1-x_{0}-\delta)<\ln(\frac{1-\delta}{x_{0}})$ (i.e.,  $\beta_v<\gamma_v$).  Due to the fact that  $\alpha_v>\beta_v$ in Lemma \ref{lem4}, $\alpha_v>\gamma_v$ in Lemma \ref{lem5}, and $\beta_v<\gamma_v$, one can get that $\alpha_v>\gamma_v>\beta_v$, $\alpha_v-2 \beta_v+\gamma_v>0$, and $\frac{\gamma_v-\beta_v}{\alpha_v-2\beta_v+\gamma_v}\in[0,1]$. Since $\alpha_v-2\beta_v+\gamma_v>0$, it follows from \eqref{eq22} and \eqref{eq23} that  $p^\ast=\frac{\gamma_v-\beta_v}{\alpha_v-2\beta_v+\gamma_v}$ is the minimum point of the function $J_v^\ast(p)$ over the internal $[0, 1]$. Therefore, $\min\limits_{p\in [0, 1]}J_v^\ast(p)=J_v^\ast(p^\ast)$ holds.
			
			Next, we consider another case of $2(1-x_{0}-\delta)\geq\ln(\frac{1-\delta}{x_{0}})$  (i.e.,  $\beta_v\geq\gamma_v$). From \eqref{eq22}, the derivative of $J^*_{v}(p)$ with respect to $p$ can be rewritten as
			$
			\frac{\textrm{d}J^*_{v}(p)}{\textrm{d}p}=2[(\alpha_{v}-2\beta_{v}+\gamma_{v})p+\beta_{v}-\gamma_{v}]\\
			=2[(\alpha_{v}-\beta_{v})p+(\beta_{v}-\gamma_{v})(1-p)].
			$
			Since $\alpha_v>\beta_v$ in Lemma \ref{lem4}  and $\beta_v\geq\gamma_v$,   one gets $\frac{\textrm{d}J^*_{v}}{\textrm{d}p}\geq 0$ for $\forall p\in[0, 1]$.
			Therefore, the function  $J^*_{v}(p)$ is monotonically increasing over the internal $[0, 1]$ which means that $\min\limits_{p\in [0, 1]}J_v^\ast(p)=J_v^\ast(0)$.
		\end{proof}
		
		From Lemma \ref{lem5} and Theorem \ref{thm4}, one knows that under the condition of $x_0>\delta$, when optimal reward and punishment are used  independently, applying optimal punishing incentive ($p=0$)  always induces a lower cumulative cost than optimal rewarding incentive ($p=1$) as obtained in Ref. \cite{Wang2022JRCS}.  Then, we consider the combined incentive. We find that when $2(1-x_{0}-\delta)\geq \ln(\frac{1-\delta}{x_{0}})$, applying optimal punishing incentive still leads to a lower cumulative cost. Whereas when $2(1-x_{0}-\delta)<\ln(\frac{1-\delta}{x_{0}})$,  the cumulative cost of the optimally combined incentive is lowest at $p^*$ for any $p\in[0, 1]$ in Theorem  \ref{thm4}.  In addition, these theoretical results remain valid for both  complete  graphs and regular networks.

		\subsection{$x_0<\delta$}
		Lastly, we consider the case where the initial cooperation level is lower than the difference between the full cooperation state and the desired cooperation state (i.e., $x_0<\delta$). In this case, we first carry out the analysis about the relation between $\beta_v$ and $\gamma_v$.
		\\
		\begin{lemma}\label{lem6}
			The inequality  $2(1-x_{0}-\delta)<\ln(\frac{1-\delta}{x_{0}})$ holds,  that is, $\beta_v<\gamma_v$.
		\end{lemma}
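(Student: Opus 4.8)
The plan is to reduce this lemma to Lemma~\ref{lem4} by exploiting a symmetry between the roles played by $x_0$ and $\delta$ in the cost coefficients. First I would record, via Remark~\ref{Remark5}, that the case $x_0<\delta$ forces $0<x_0<0.5$ and $x_0<\delta<1-x_0$. The target inequality $\beta_v<\gamma_v$ is equivalent, after dividing by $\vartheta_v>0$ and rearranging, to $\ln\bigl(\frac{1-\delta}{x_0}\bigr)>2(1-x_0-\delta)$, which is precisely the assertion $\gamma_v-\beta_v>0$.

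Next I would isolate the key algebraic observation. Since $\vartheta_v$ does not depend on $x_0$ or $\delta$, and $\beta_v=\vartheta_v(1-x_0-\delta)$ is manifestly symmetric under the interchange $x_0\leftrightarrow\delta$, whereas a direct substitution gives $\gamma_v(x_0,\delta)=\vartheta_v\bigl[\delta-1+x_0+\ln(\frac{1-\delta}{x_0})\bigr]=\alpha_v(\delta,x_0)$, the quantity $\gamma_v(x_0,\delta)-\beta_v(x_0,\delta)$ coincides with $\alpha_v(\delta,x_0)-\beta_v(\delta,x_0)$. Thus establishing $\gamma_v>\beta_v$ at $(x_0,\delta)$ is the same as establishing $\alpha_v>\beta_v$ at the swapped point $(\delta,x_0)$.

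The main step is then to check that this swapped point falls within the regime covered by Lemma~\ref{lem4}. Reading $(\delta,x_0)$ as the pair (initial level, gap) that feeds Lemma~\ref{lem4}, its hypotheses demand that the first coordinate exceed the second, that the second lie in $(0,0.5)$, and that the first coordinate sit between the second and its complement; here these become $\delta>x_0$, $0<x_0<0.5$, and $x_0<\delta<1-x_0$, all of which hold by the range recorded from Remark~\ref{Remark5}. Invoking Lemma~\ref{lem4} at $(\delta,x_0)$ then yields $\alpha_v(\delta,x_0)>\beta_v(\delta,x_0)$, i.e.\ $\gamma_v(x_0,\delta)>\beta_v(x_0,\delta)$, which is the claim.

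The point I expect to guard against is exactly this range-matching: one must confirm that the strict inequalities defining $\mathcal{Z}$ in the $x_0<\delta$ case are transported by the interchange onto the strict inequalities defining $\mathcal{Z}$ in the $x_0>\delta$ case, so that Lemma~\ref{lem4} genuinely applies. Should a self-contained argument be preferred, the same conclusion follows by mirroring the proof of Lemma~\ref{lem4}: write $\gamma_v-\beta_v=\vartheta_v[\Delta_v(1-\delta)-\Delta_v(x_0)]$ with $\Delta_v(x)=\ln x-2x$, apply the $\mathcal{T}_v$-argument at $x_0$ to obtain $\Delta_v(x_0)<\Delta_v(1-x_0)$, and then use the unimodality of $\Delta_v$ (increasing on $(0,0.5)$, decreasing on $(0.5,1)$) together with $x_0<1-\delta<1-x_0$ to conclude $\Delta_v(1-\delta)>\Delta_v(x_0)$; in that route the final unimodality ordering is the only nontrivial step.
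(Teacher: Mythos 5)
Your proposal is correct, and your primary argument takes a genuinely different route from the paper. The paper proves Lemma~\ref{lem6} from scratch: it writes $\beta_v-\gamma_v=\vartheta_v\bigl[\Psi_v(1-\delta)-\Psi_v(x_0)\bigr]$ with $\Psi_v(x)=2x-\ln x$, establishes $\Psi_v(1-x_0)<\Psi_v(x_0)$ via an auxiliary monotone function, and then invokes the unimodality of $\Psi_v$ on $(x_0,1-x_0)$. You instead observe that $\beta_v$ is invariant under $x_0\leftrightarrow\delta$ while $\gamma_v(x_0,\delta)=\alpha_v(\delta,x_0)$, so $\gamma_v-\beta_v$ at $(x_0,\delta)$ equals $\alpha_v-\beta_v$ at $(\delta,x_0)$, and you verify that the swap carries the region of Remark~\ref{Remark5}(3) exactly onto that of Remark~\ref{Remark5}(2), so Lemma~\ref{lem4} applies verbatim; this is shorter and makes explicit a duality the paper re-derives by hand (the same symmetry relates Lemma~\ref{lem5} to Lemma~\ref{lem7}). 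Your fallback is essentially the paper's proof with $\Delta_v=-\Psi_v$, and you state the decisive unimodality step in the correct (weaker) form $\Delta_v(1-\delta)>\Delta_v(x_0)$. It is worth noting that the paper's own write-up asserts the stronger claim $\Psi_v(1-\delta)<\min\{\Psi_v(1-x_0),\Psi_v(x_0)\}$, which can fail within the admissible region (e.g.\ $x_0=0.4$, $\delta=0.59$ gives $\Psi_v(0.41)\approx 1.7116>1.7108\approx\Psi_v(0.6)$), whereas the bound actually needed, $\Psi_v(1-\delta)<\Psi_v(x_0)$, does hold because $1-\delta$ is interior to $(x_0,1-x_0)$ and $\Psi_v(1-x_0)<\Psi_v(x_0)$; your formulation avoids this overstatement.
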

		\begin{proof}
			From Remark \ref{Remark5}, one can get that  $x_{0} \in(0,0.5)$ and  $x_{0}<1-\delta<1-x_{0}$ since $x_0<\delta$. To illustrate that  $\beta_v<\gamma_v$ holds, we first define the function $\Lambda_v(x_{0}, \delta)$ as
			\begin{equation}\label{eq32}
				\begin{aligned}
					\Lambda_v(x_{0}, \delta)&=\beta_v-\gamma_v=\vartheta_v\Big[2(1-x_{0}-\delta)-\ln\Big(\frac{1-\delta}{x_{0}}\Big)\Big]\\
					&=\vartheta_v\Big\{\big[2(1-\delta)-\ln(1-\delta)\big]-\big[2x_{0}-\ln (x_{0})\big]\Big\}.
				\end{aligned}
			\end{equation}
			
			Furthermore, to prove the function  $\Lambda_v(x_{0}, \delta)<0$, we define the function  $\Psi_v(x)$ as
			\begin{equation}\label{eq33}
				\begin{aligned}
					\Psi_v(x)=2x-\ln(x),
				\end{aligned}
			\end{equation}
			and its derivative  with respect to $x$ is
			$
			\frac{\textrm{d}\Psi_{v}(x)}{\textrm{d}x}=\frac{2x-1}{x}.
			$
			From this, one can see that  the function $\Psi_v(x)$ is monotonically decreasing over the interval $(0, 0.5)$ and monotonically increasing over the interval $(0.5,1)$.
			
			In addition, we define the function $\Gamma_v(x_0)$ as
			\begin{equation}\label{eq34}
				\begin{aligned}
					\Gamma_v(x_0)&=\Psi(1-x_0)-\Psi(x_0)\\
					&=2(1-2x_0)-\ln(1- x_0)+\ln(x_0).
				\end{aligned}
			\end{equation}
			Therefore, taking the derivative of \eqref{eq34} with respect to $x_0$, we have
			$
			\frac{\textrm{d}\Gamma_v(x_0)}{\textrm{d}x_0}=\frac{(2x_0-1)^2}{x_0(1-x_0)}\geq0
			$
			for any $x_0\in(0, 0.5)$, indicating that the function $\Gamma_v(x_0)$ is monotonically increasing over $(0, 0.5)$. Thus, one can check that
			\begin{equation}\label{eq35}
				\begin{aligned}
					\Gamma_v(x_0)<\mathop{\sup}_{x_0\in(0, 0.5)}\Gamma_v(x_0)=\Gamma_v(x_0)\big|_{x_0=0.5}=0,
				\end{aligned}
			\end{equation}
			and thus $\Psi_v(1-x_{0})<\Psi_v(x_{0})$ holds, that is,  $2(1-x_{0})-\ln(1-x_{0})<2x_{0}-\ln (x_{0})$.  Due to the fact that $x_{0}<1-\delta<1-x_{0}$ and $\Psi_v(x)$ is monotonically decreasing in $(0, 0.5)$, while monotonically increasing in $(0.5, 1)$, we have $\Psi_v(1-\delta)< \mathop{\min}\{\Psi_v(1-x_{0}),  \Psi_v(x_{0})\}$. This  implies $\Psi_v(1-\delta)<\Psi_v(x_{0})$, or equivalently,  $2(1-\delta)-\ln(1-\delta)<2x_{0}-\ln(x_{0})$.
			From \eqref{eq32}, one further derives that  $\Lambda_v(x_{0}, \delta)=\vartheta_v[\Psi_v(1-\delta)-\Psi_v(x_{0})]<0$, and $\beta_v<\gamma_v$.
		\end{proof}
		
		\subsubsection{The independent usage of optimal reward or punishment}
		When reward and punishment are independently used, we have the following results.
		\begin{lemma}\label{lem7}
			If $p=1$,  the requested cumulative cost of optimal rewarding  incentive is lower than that of  optimal punishing incentive, that is,  $J_{R, v}^\ast<J_{P, v}^\ast$ (i.e., $\alpha_v<\gamma_v$).
		\end{lemma}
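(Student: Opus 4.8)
The plan is to reuse the sign analysis already assembled in \eqref{eq24} and \eqref{eq25}, since the assertion $\alpha_v < \gamma_v$ is precisely the statement that $\mathcal{G}(x_0, \delta) = \alpha_v - \gamma_v < 0$. First I would recall from \eqref{eq24} that
\[
\mathcal{G}(x_0, \delta) = \alpha_v - \gamma_v = \vartheta_v \ln\Big[\frac{x_0(1-x_0)}{\delta(1-\delta)}\Big],
\]
so that, because $\vartheta_v > 0$, the sign of $\alpha_v - \gamma_v$ is governed entirely by whether the argument of the logarithm lies below or above $1$; equivalently, by the sign of $\Theta_v(x_0, \delta) = x_0(1-x_0) - \delta(1-\delta)$ introduced in \eqref{eq25}.

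The key step is to exploit the factorization $\Theta_v(x_0, \delta) = (x_0 - \delta)(1 - x_0 - \delta)$, which reduces the matter to a transparent two-factor sign check. Under the standing hypothesis $x_0 < \delta$ of this subsection, the first factor $x_0 - \delta$ is strictly negative. The second factor is controlled by the domain constraint: from Remark \ref{Remark5}(3) (or directly from the defining condition $x_0 + \delta < 1$ of the admissible region $\mathcal{Z}$) we have $1 - x_0 - \delta > 0$. Hence $\Theta_v(x_0, \delta)$ is the product of a negative and a positive quantity, and is therefore strictly negative.

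From $\Theta_v(x_0, \delta) < 0$ it follows that $x_0(1-x_0) < \delta(1-\delta)$, so the argument of the logarithm in \eqref{eq24} lies in $(0,1)$ and $\mathcal{G}(x_0, \delta) < 0$, which yields $\alpha_v < \gamma_v$ and thus $J_{R,v}^\ast < J_{P,v}^\ast$. I do not expect any genuine obstacle here; the only point demanding attention is to confirm that the second factor $1 - x_0 - \delta$ is positive, which is supplied by the constraint built into $\mathcal{Z}$ rather than by the case hypothesis itself. This argument is the exact mirror image of Lemma \ref{lem5}: there the hypothesis $x_0 > \delta$ forced $\Theta_v > 0$ and hence $\alpha_v > \gamma_v$, whereas here the reversed hypothesis flips the sign of the first factor and thereby reverses the final inequality.
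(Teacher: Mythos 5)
Your proposal is correct and follows essentially the same route as the paper: both reduce the claim to the sign of $\Theta_v(x_0,\delta)=(x_0-\delta)(1-x_0-\delta)$ via \eqref{eq24}--\eqref{eq25}, using $x_0<\delta$ together with $x_0+\delta<1$ from $\mathcal{Z}$. In fact your version is slightly more careful, since the paper's proof contains a sign typo (it writes $\mathcal{G}(x_0,\delta)>0$ where $\mathcal{G}(x_0,\delta)<0$ is meant), which you get right.
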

		\begin{proof}
			Since $x_0<\delta$, it follows from \eqref{eq24} and \eqref{eq25} that $ \Theta_v(x_0, \delta)<0$, or equivalently,   $\mathcal{G}(x_0, \delta)>0$. Consequently, $J^*_v(1)<J^*_v(0)$ (i.e., $J_{R, v}^\ast<J_{P, v}^\ast$).
		\end{proof}
		
		\subsubsection{Optimally combined incentive}
		Then, we respectively analyze the optimally combined incentive with minimal cumulative cost for all $p \in[0, 1]$ corresponding to the two cases of $2(1-x_{0}-\delta)<\ln(\frac{1-x_{0}}{\delta})$ and $2(1-x_{0}-\delta)\geq\ln(\frac{1-x_{0}}{\delta})$, shown in the next theorem.
		\begin{theorem} \label{thm5}
			The following statements hold:
			\begin{enumerate}[(1)]
				\item When $2(1-x_{0}-\delta)<\ln(\frac{1-x_{0}}{\delta})$ (i.e., $\beta_v<\alpha_v$), if $p^*=\frac{\gamma_v-\beta_v}{\alpha_{v}-2\beta_{v}+\gamma_{v}}=\frac{x_0-1+\delta+\ln[\frac{1-x_0}{\delta}]}{4(x_0-1+\delta)+\ln[\frac{(1-x_0)(1-\delta)}{x_0 \delta}]}$, the cumulative cost of the optimally combined incentive is minimized for $\forall  p\in[0, 1]$, that is,  $\min\limits_{p\in [0, 1]}J_v^\ast(p)=J_v^\ast(p^\ast).$
				\item When $2(1-x_{0}-\delta)\geq\ln(\frac{1-x_{0}}{\delta})$ (i.e., $\beta_v\geq\alpha_v$), if $p=1$, then the cumulative cost of the optimally combined incentive is minimized for  $\forall p\in[0, 1]$, that is, $\min\limits_{p\in [0, 1]}J_v^\ast(p)=J_v^\ast(1).$
			\end{enumerate}
		\end{theorem}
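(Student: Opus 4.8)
The plan is to treat $J_v^\ast(p)$ as a single-variable quadratic on $[0,1]$ and read off its constrained minimizer from the sign structure of the derivative $\frac{\textrm{d}J_v^\ast(p)}{\textrm{d}p}=2\big[(\alpha_v-2\beta_v+\gamma_v)p+\beta_v-\gamma_v\big]$ in \eqref{eq22}, exactly as in the proof of Theorem~\ref{thm4}, but now feeding in the two order relations proper to the regime $x_0<\delta$, namely $\beta_v<\gamma_v$ (Lemma~\ref{lem6}) and $\alpha_v<\gamma_v$ (Lemma~\ref{lem7}). The two stated cases correspond precisely to whether the unconstrained critical point $p^\ast$ of \eqref{eq23} lands inside $[0,1]$ or is expelled to the endpoint. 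I would open by recording the algebraic equivalence (which uses $\vartheta_v>0$) between the logarithmic hypothesis $2(1-x_0-\delta)<\ln(\frac{1-x_0}{\delta})$ and the coefficient inequality $\beta_v<\alpha_v$, since that is the hinge on which the dichotomy turns.

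For statement~(1), under $\beta_v<\alpha_v$ I would first assemble the full chain $\beta_v<\alpha_v<\gamma_v$ by combining this hypothesis with Lemma~\ref{lem7}. From $\beta_v<\alpha_v$ and $\beta_v<\gamma_v$ the leading coefficient $\alpha_v-2\beta_v+\gamma_v=(\alpha_v-\beta_v)+(\gamma_v-\beta_v)$ is strictly positive, so $J_v^\ast(p)$ is a convex parabola whose vertex $p^\ast=\frac{\gamma_v-\beta_v}{\alpha_v-2\beta_v+\gamma_v}$ is its global minimizer. It then remains to check $p^\ast\in[0,1]$: the positive numerator $\gamma_v-\beta_v$ over the positive denominator gives $p^\ast>0$, while the inequality $\gamma_v-\beta_v<\alpha_v-2\beta_v+\gamma_v$, equivalent to $\beta_v<\alpha_v$, gives $p^\ast<1$. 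Hence the minimum over $[0,1]$ is attained at $p^\ast$ and $\min\limits_{p\in[0,1]}J_v^\ast(p)=J_v^\ast(p^\ast)$.

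For statement~(2), under $\beta_v\geq\alpha_v$ the vertex leaves the interval and the minimum is pushed to an endpoint. Mirroring the second half of Theorem~\ref{thm4}, I would regroup the derivative as $\frac{\textrm{d}J_v^\ast(p)}{\textrm{d}p}=2\big[(\alpha_v-\beta_v)p+(\beta_v-\gamma_v)(1-p)\big]$. Here $\alpha_v-\beta_v\leq0$ by hypothesis and $\beta_v-\gamma_v<0$ by Lemma~\ref{lem6}, so for every $p\in[0,1]$ both summands are nonpositive and $\frac{\textrm{d}J_v^\ast(p)}{\textrm{d}p}\leq0$. Thus $J_v^\ast(p)$ is nonincreasing on $[0,1]$, its minimum sits at the right endpoint $p=1$, and $\min\limits_{p\in[0,1]}J_v^\ast(p)=J_v^\ast(1)$, which is pure reward.

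The computations are routine once the correct orderings are in hand; the one genuinely delicate point is the upper feasibility check $p^\ast\leq1$ in statement~(1), because this is exactly where the sharper hypothesis $\beta_v<\alpha_v$ is indispensable rather than the merely ambient $\beta_v<\gamma_v$. Without it the vertex could exceed $1$, and the problem would fall back into the monotone endpoint situation of case~(2). I would therefore foreground the endpoint-feasibility verification as the pivotal step and state the equivalence between the logarithmic inequality and $\beta_v<\alpha_v$ explicitly so that the two cases are seen to partition all of $\mathcal{Z}$ for this regime.
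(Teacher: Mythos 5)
Your proposal is correct and follows essentially the same route as the paper: both cases rest on the ordering $\beta_v<\alpha_v<\gamma_v$ (hence a convex parabola with interior vertex) for statement (1) and on the regrouped derivative $2[(\alpha_v-\beta_v)p+(\beta_v-\gamma_v)(1-p)]$ being nonpositive for statement (2). Your explicit verification that $p^\ast<1$ is equivalent to $\beta_v<\alpha_v$ is a small clarification of a step the paper only asserts, but it is not a different argument.
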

		\begin{proof}
			Firstly, we prove the first statement. Due to the fact that   $\alpha_v<\gamma_v$ in Lemma \ref{lem7},  $\beta_v < \gamma_v$ in Lemma \ref{lem6}  and $\beta_v<\alpha_v$, we have $\beta_v<\alpha_v<\gamma_v$, $2 \beta_v-\alpha_v-\gamma_v<0$, and $p^\ast=\frac{\gamma_v-\beta_v}{\alpha_{v}-2\beta_{v}+\gamma_{v}}\in(0,1)$. Since $2 \beta_v-\alpha_v-\gamma_v<0$, it follows from \eqref{eq21} that  the function $J_v^\ast(p)$ is monotonically decreasing over $[0, p^*)$ and monotonically increasing over $[p^*,1]$. Therefore, $p^*$ is the minimum point of the function $J_v^\ast(p)$ over the internal $[0, 1]$, and
			$\min\limits_{p\in [0, 1]}J_v^\ast(p)=J_v^\ast(p^\ast).$
			
			Then, we prove the second statement. From \eqref{eq22}, the derivative of $J^*_{v}(p)$ with respect to $p$ can be rewritten as
			$
			\frac{\textrm{d}J^*_{v}(p)}{\textrm{d}p}=2[(\alpha_{v}-2\beta_{v}+\gamma_{v})p+\beta_{v}-\gamma_{v}]
			=2[(\alpha_{v}-\beta_{v})p+(\beta_{v}-\gamma_{v})(1-p)].
			$
			Since $\alpha_{v}\leq \beta_{v}$ and $\beta_{v}<\gamma_{v}$ in Lemma \ref{lem6}, one can yield that $\frac{\textrm{d}J^*_{v}}{\textrm{d}p}<0$ for $\forall p\in[0, 1]$, and thus  $J^*_{v}(p)$ is monotonically decreasing over $[0, 1]$. Therefore, $\min\limits_{p\in [0, 1]}J_v^\ast(p)=J_v^\ast(1)$ holds.
		\end{proof}
		
		From Lemma \ref{lem7} and  Theorem \ref{thm5}, one can summarize that under the condition of $x_0<\delta$,  when optimal reward and punishment are used  independently, applying optimal rewarding incentive ($p=1$)  always induces a lower cumulative cost than optimal punishing incentive ($p=0$) as obtained in Ref. \cite{Wang2022JRCS}. Then we consider the combined incentive. We find that when $2(1-x_{0}-\delta)\geq\ln(\frac{1-x_{0}}{\delta})$, applying optimal rewarding incentive still leads to a lower cumulative cost, while when $2(1-x_{0}-\delta)<\ln(\frac{1-\delta}{x_{0}})$,  the cumulative cost for the optimally combined incentive is lowest at $p^*$ for any $p\in[0, 1]$ in Theorem  \ref{thm5}.  In addition, these theoretical results remain valid for both  complete  graphs  and regular networks.

		\section{Numerical Results}
		In this section, we provide numerical calculations and Monte Carlo simulations to verify the theoretical results of the optimally combined incentives on different networks obtained in Theorems \ref{thm3}-\ref{thm5}. For any given network structure, we show the amount of cumulative cost $J^*_v(p)$ as a function of the rewarding preference $p$ as depicted in Figs.~\ref{fig2}-\ref{fig6}. Notably, if $p=0$, then the cumulative cost produced by $u^*$ is equal to that produced by $u^*_P$; if $p=1$, then the cumulative cost produced by $u^*$ is equal to that produced by $u^*_R$.

		\begin{figure*}[!t]
			\begin{center}
				\includegraphics[width=7in]{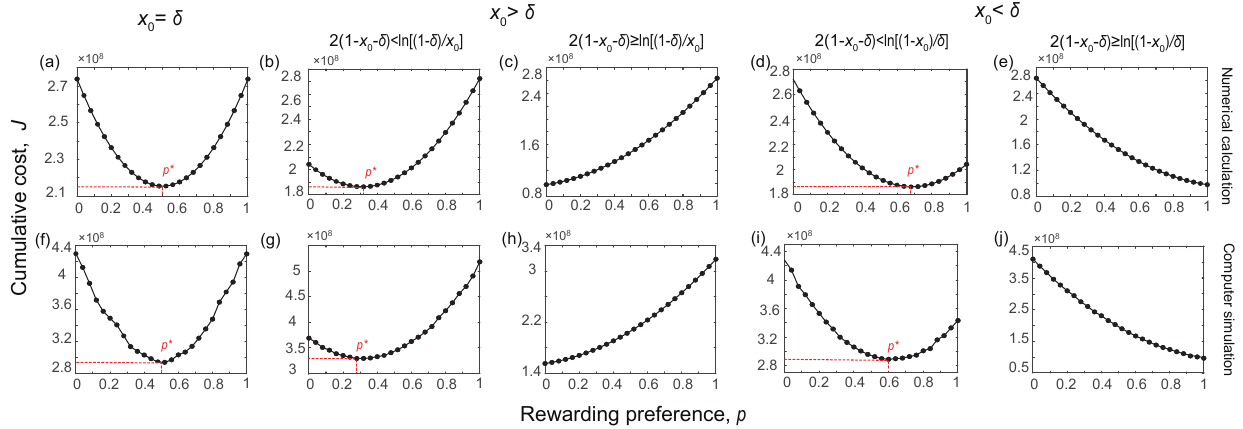}
				\caption{The required amount of cumulative cost to reach the expected proportion of cooperators in the population for the optimally combined incentive in dependence of the rewarding preference $p$ on complete graphs.  First column represents the results under the condition of $x_{0}=\delta$; second and third columns represent the results under the condition of $x_{0}>\delta$, where second column satisfies $2(1-x_{0}-\delta)<\ln(\frac{1-\delta}{x_{0}})$ and other cases are in third column; forth and last columns represent the results under
					the condition of $x_{0}<\delta$, where forth column satisfies $2(1-x_{0}-\delta)<\ln(\frac{1-x_{0}}{\delta})$ and other cases are in last column.  Besides, top row  represents the results derived from numerical calculations based on \eqref{eq8}; bottom row represents the results obtained from Monte carlo simulations by averaging over 200 independent simulation runs. The values of $\delta$  and $x_{0}$ are $\delta = x_{0} =0.1$ (First column); $\delta = 0.1$ and $x_{0} = 0.15$ (Second column); $\delta = 0.1$ and $x_{0}  = 0.3$ (Third column); $\delta = 0.15$ and $x_{0}  = 0.1$ (Forth column); and $\delta = 0.3$ and $x_{0} = 0.1$ (Last column). Parameters: $n=100$, $b=2$,  and  $c=1$. }\label{fig2}
			\end{center}
		\end{figure*}

		\subsection{Complete graphs}
		In order to validate our analytical results on complete graphs, we will present our numerical results from the following three different cases, where $x_{0}=\delta$, $x_{0}>\delta$, and $x_{0}<\delta$, as shown in Fig.~\ref{fig2}.

		\begin{figure*}[!t]
			\begin{center}
				\includegraphics[width=7in]{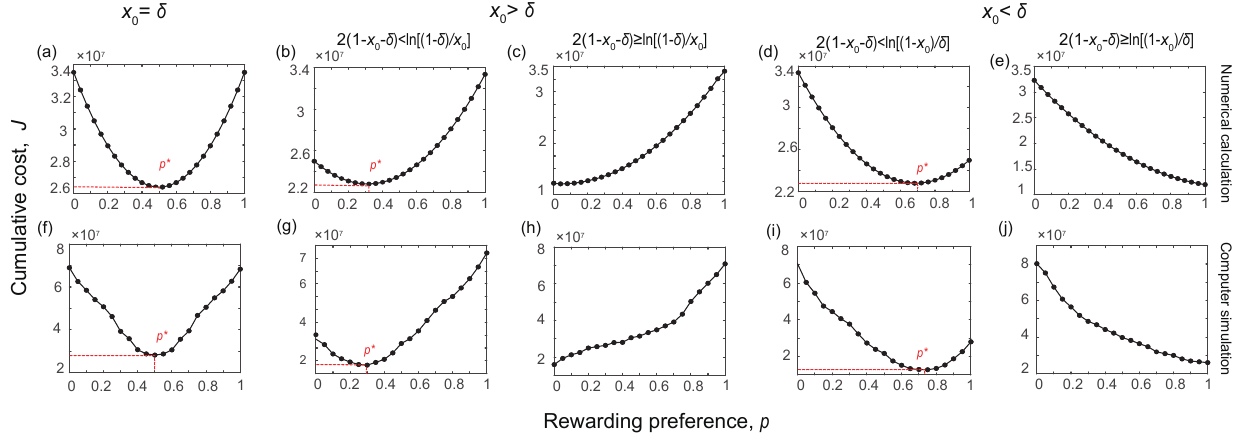}
				\caption{The required amount of cumulative cost to reach the expected proportion of cooperators in the population for the optimally combined incentive in dependence of the rewarding preference $p$ on regular networks. Top row represents the results derived from numerical calculations based on \eqref{eq9}; bottom row represents the results obtained from Monte carlo simulations by averaging over 200 independent simulation runs.  Here, regular networks are  generated by using a two-dimensional square lattice of size $n=l\times l$. Parameters: $l=10$ and $k=4$. Other parameters and descriptions are identical to Fig.~\ref{fig2}.}\label{fig3}
			\end{center}
		\end{figure*}

		\subsubsection{$x_{0}=\delta$}
		From the first column of Fig.~\ref{fig2}, we observe that
		the amount of cumulative cost $J^*_v(p)$ monotonically decreases as the rewarding preference $p$ increases over $[0, p^*)$, while monotonically increases over $[p^*,1]$. Besides, the cumulative cost value $J^*_v(p)$ is lowest at $p^\ast=0.5$ over an internal $[0, 1]$. Moreover, $J_v^*(p)$ is always below the values of $J_v^*(0)$ and $J_v^*(1)$ over an internal $[0,1]$, indicating that the optimally combined incentive is a lower cost scheme compared with optimal reward or punishment. In particular, we can see that the $J_v^*(0)$ value is the same as the $J_v^*(1)$ value, which implies that the cumulative cost of optimal reward is the same cumulative cost as  optimal punishment when the both are used independently. Thus, we verify the theoretical results on complete graphs obtained in Theorem \ref{thm3} and Lemma \ref{lem3}.

		\begin{figure}[!t]
			\begin{center}
				\includegraphics[width=3.5in]{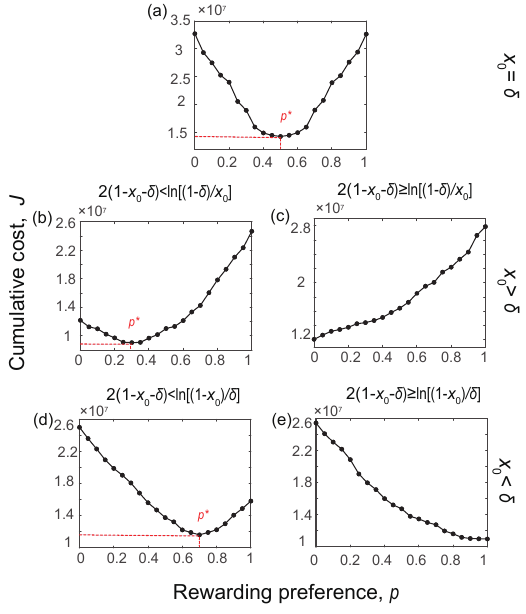}
				\caption{The required amount of cumulative cost to reach the expected proportion of cooperators in the population for the optimally combined incentive in dependence of the rewarding preference $p$ on scale-free networks. All results are derived from Monte Carlo simulations by averaging over 200 independent simulation runs on scale-free networks, which are obtained by using preferential-attachment model starting from $m_0=6$ where at every time step each new node is connected to $m=2$ existing nodes fulfilling the standard power-law distribution~\cite{Barab:1999}. Other parameters and descriptions are identical to Fig.~\ref{fig3}.
				}\label{fig4}
			\end{center}
		\end{figure}

		\subsubsection{$x_{0}>\delta$}
		From the second column of Fig.~\ref{fig2} (i.e., when $2(1-x_{0}-\delta)<\ln(\frac{1-\delta}{x_{0}})$), we find that the amount of cumulative cost $J^*_v(p)$ is monotonically decreasing over $[0, p^*)$, and monotonically increasing over $[p^*, 1]$. In addition,  $J^*_v(p)$ is minimal at  $p^\ast$ in the interval $[0, 1]$, indicating that the optimally combined incentive with $p^\ast$ is a lowest cost scheme for all $p\in [0, 1]$. Particularly, the $J_v^*(0)$ value is lower than the $J_v^*(1)$ value, which means that the cumulative cost value of optimal punishment is lower than the usage of optimal reward. On the other hand, from the third column of Fig.~\ref{fig2}, when $2(1-x_{0}-\delta)\leq \ln(\frac{1-\delta}{x_{0}})$, we see that $J^*_v(p)$ is monotonically increasing over  $[0,1]$,  then $J^*_v(0)$ is always less than  the values of $J^*_v(p)$ for all $p\in(0,1]$,  which means that  optimal punishment leads to the lowest cumulative cost. Therefore, we verify the theoretical results on complete graphs  obtained in Theorem \ref{thm4} and Lemma \ref{lem5}.
		
		\begin{figure}[!t]
			\begin{center}
				\includegraphics[width=3.5in]{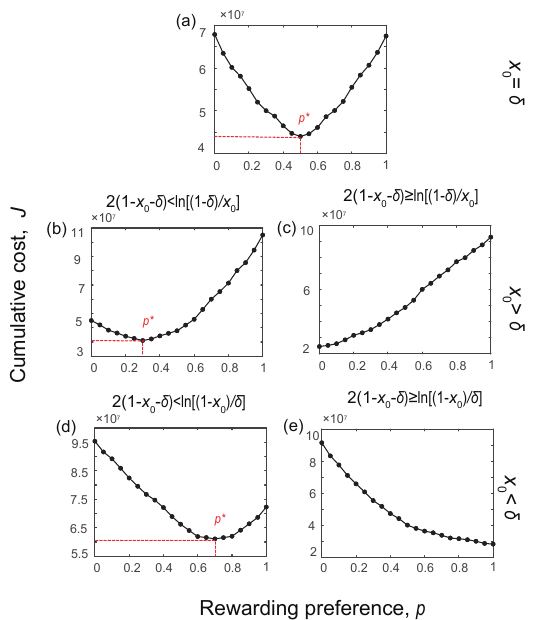}
				\caption{The required amount of cumulative cost to reach the expected proportion of cooperators in the population for the optimally combined incentive in dependence of the rewarding preference $p$ on small-world networks. All results are derived from Monte Carlo simulations by averaging over 200 independent simulation runs on small-world networks, which are obtained by using Watts-Strogatz model with $p=0.1$ rewiring parameter ~\cite{Watts:1998}. Other parameters and descriptions are identical to Fig.~\ref{fig3}.
				}\label{fig5}
			\end{center}
		\end{figure}

		\subsubsection{$x_{0}<\delta$}
		From the forth column of Fig.~\ref{fig2}, we see that when
		$2(1-x_{0}-\delta)<\ln(\frac{1-x_{0}}{\delta})$, the amount of cumulative cost $J^*_v(p)$ is monotonically decreasing over  $[0, p^*)$, while increasing over $[ p^*, 1]$. In addition, $J^*_v(p)$ is minimal at $p^\ast$ over $[0, 1]$. Specially, the $J^*_v(1)$ value is lower than the $J^*_v(0)$ value, which implies that the cumulative cost value of optimal reward is lower than the usage of optimal punishment.  On the contrary, from the fifth column of Fig.~\ref{fig2}, $J^*_v(p)$ is monotonically decreasing over $[0, 1]$,  then $J^*_v(1)$ is lower than the values of $J^*_v(p)$ for all $p\in[0,1)$, that is, optimal reward is cheaper incentive. Therefore, we verify the theoretical results on complete graphs obtained in Theorem \ref{thm5} and Lemma \ref{lem7}.
		
		Lastly, from the first and second rows of Fig.~\ref{fig2},
		we see that for any given scenario above-mentioned, our simulations results coincide with numerical calculations results, both of which support the analytical results on complete graphs as stated in Theorems \ref{thm3}-\ref{thm5}.

		\begin{figure}[!t]
			\begin{center}
				\includegraphics[width=3.5in]{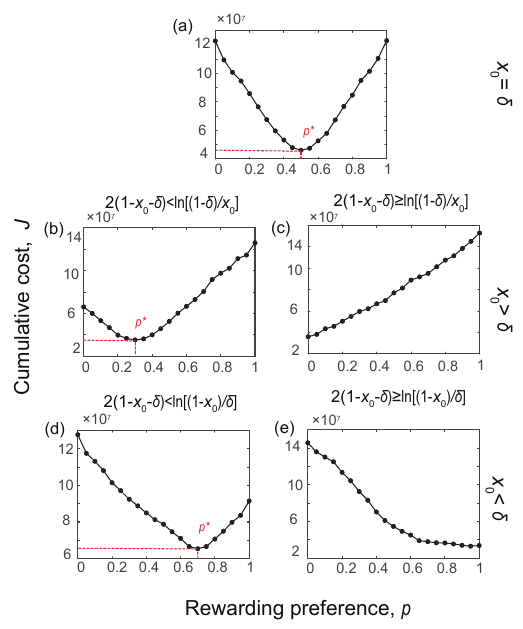}
				\caption{The required amount of cumulative cost to reach the expected proportion of cooperators in the population for the optimally combined incentive in dependence of the rewarding preference $p$ on random networks. All results are derived from Monte Carlo simulations by averaging over 200 independent simulation runs on random networks, which are obtained by using Erd\H{o}s-R\'{e}nyi random graph model~\cite{Erd:1959}. Other parameters and descriptions are identical to Fig.~\ref{fig3}.
				}\label{fig6}
			\end{center}
		\end{figure}

		\subsection{Alternative networks}
		As we noted, the applied network topologies could be a decisive factor for how the system evolves, therefore it is essential to explore whether our theoretical results are valid on alternative complex networks. Here, we have first considered regular networks, and accordingly, the numerical results are illustrated in Fig.~\ref{fig3}. Similar to complete graphs, we find that when $x_{0}=\delta$, the cumulative cost value of the optimally combined incentive is minimal at $p^\ast=0.5$; when $x_{0}>\delta$, if $2(1-x_{0}-\delta)<\ln(\frac{1-\delta}{x_{0}})$, the  optimally combined incentive with lowest cost is the one at $p=p^\ast$, otherwise the  optimally combined incentive with lowest cost is the one at $p=0$, that is, optimal  punishment; when $x_{0}<\delta$, if  $2(1-x_{0}-\delta)<\ln(\frac{1-x_{0}}{\delta})$,  the  optimally combined incentive with lowest cost is the one at $p=p^\ast$, otherwise the  optimally combined incentive with lowest cost is the one at $p=1$, that is, optimal reward.  Besides, from the first and second rows of Fig.~\ref{fig3}, our numerical calculations and simulations results are in line with the analytical results on regular networks.
		
		Furthermore, we have also tested other networks by performing simulations, including scale-free, small-world, and random networks, as shown in Figs.~\ref{fig4}-\ref{fig6}. From these figures, our simulation results reveal that our theoretical results obtained in Theorems \ref{thm3}-\ref{thm5} remain valid not merely for complete graphs  and regular networks, but also for other types of networks, including scale-free, small-world, and random networks.

		\section{Discussion and Conclusion}
		
		In this work, we study a combined incentive policy for promoting cooperation. We first derive dynamical equations  for the evolution of cooperation level by replicator equation on complete  graphs  and by employing the pair approximation approach in the weak selection limit on regular networks, respectively. For each given structured network, there always exist  two equilibria, which correspond to respectively the full defection and full cooperation states. By analyzing the stability of equilibria, we derive the minimal incentive level required for the evolution of cooperation. Subsequently, we establish an index function for quantifying the total executing cost, and accordingly obtain the optimally combined incentive protocols, which are identical and time-invariant both for two types of networks. Furthermore, we prove that when optimal
		reward and punishment are used alone, the relation of them highly depends on the initial cooperation level, that is, applying the optimal punishing scheme requires a lower cumulative cost for the incentive-providing institution when the initial cooperation level is relatively high; otherwise, applying the rewarding scheme is cheaper.  Nonetheless, when the optimally combined incentive is considered, we always yield the theoretical conditions, under which the cumulative cost induced by the optimally combined incentive is lowest. Our results shed some light on how to use the incentives for driving the game system to reach the desirable state more effectively from a cost-efficient way.

		\section*{appendix}
		In this section, we provide some proof details of the results shown in the main text.
		\subsection{Derivations of Equation \eqref{eq9}}\label{APPENDIXA}
		We introduce some notations. Let $x_{i}$ denote the proportion of $i$-agents, let $x_{ij}$ denote the proportion of $ij$-pairs, and finally let $x_{i|j}$  denote the conditional probability of finding an $i$-agent given that the neighboring node is occupied by a $j$-agent, where $i, j \in \{C, D\}$. By using these notations, we have that $x_{C}+x_{D}=1$, $x_{ij}=x_{i|j}x_{j}$, and $x_{CD}=x_{DC}$.
		
		Besides, it can be checked that $x_{C}$ and $x_{C|C}$ 
		can be used to describe the evolutionary dynamics of a dynamical system because $x_{D}=1-x_{C}$, $x_{D|C}= 1-x_{C|C}$, $ x_{C|D}=\frac{x_{CD}}{x_{D}}=\frac{x_{C}(1-x_{C|C})}{1-x_{C}}$, $x_{D|D}=1-x_{C|D}=\frac{1-2x_{C}+x_{C}x_{C|C}}{1-x_{C}}$, $x_{DC}= x_{CD}= x_{C}x_{D|C}= x_{C}(1-x_{C|C})$, and $x_{DD}=x_{D}x_{D|D}=1-2x_{C}+x_{C}x_{C|C}$.

			We consider the combined incentive into the  prisoner's dilemma game. For pairwise-comparison updating, we randomly choose two neighboring agents who adopts unequal strategies. Accordingly, if the selected $D$-agent updates its strategy and imitates a $C$-neighbor successfully, then the $n_{C}$ number of cooperators increases by one and the $n_{CC}$ number of $CC$-pairs increases by $1+(k-1)q_{C\mid D}$. This means that $x_{C}$ increases by $\frac{1}{n}$ and $x_{CC}$ increases by $\frac{1+(k-1)q_{C\mid D}}{kn/2}$.
			Thus, the transition probability $T^{+}(n_{C})$ that $n_{C}$ increases by one for $\omega \rightarrow 0$ is
			\begin{equation}\label{eq70}
				\begin{aligned}
					T^{+}(n_{C})&=x_{D}x_{C|D}W_{D\rightarrow C}\\
					&=x_{C}(1-x_{C|C})(\frac{1}{2}+\omega\frac{\Pi_{C}-\Pi_{D}}{4})+O(\omega^{2}).
				\end{aligned}
			\end{equation}
			
			Similarly, if the selected $C$-agent imitates a $D$-neighbor successfully, then $n_{C}$ decreases by one and $n_{CC}$ decreases by $(k-1)x_{C\mid C}$, indicating $x_{C}$ decreases by $\frac{1}{n}$ and $x_{CC}$ decreases by $\frac{(k-1)x_{C|C}}{kn/2}$.  Therefore, the transition probability $T^{-}(n_{C})$ that $n_{C}$ decreases by one for $\omega \rightarrow 0$ is
			\begin{equation}\label{eq71}
				\begin{aligned}
					T^{-}(n_{C})&=x_{C}x_{D|C}W_{C\rightarrow D}\\
					&=x_{C}(1-x_{C|C})(\frac{1}{2}+\omega\frac{\Pi_{D}-\Pi_{C}}{4})+O(\omega^{2}).
				\end{aligned}
			\end{equation}
			
			\begin{assumption}\label{assumption4}
				We assume that one replacement event occurs in each unit of time $t$.
			\end{assumption}
			
			Under Assumption \ref{assumption4}, one can derive the derivative of $x_{C}$ over  $t$ as
			\begin{equation}\label{eq72}
				\begin{aligned}
					\frac{\textrm{d}x_{C}(t)}{\textrm{d}t}&=\frac{\frac{1}{n}T^{+}(n_{C})-\frac{1}{n}T^{-}(n_{C})}{\frac{1}{n}}=T^{+}(n_{C})-T^{-}(n_{C})\\
					&=\omega\Psi(x_{C}, x_{C\mid C}, \omega),
				\end{aligned}
			\end{equation}
			where
			\begin{equation}\label{eq73}
				\begin{aligned}
					\Psi(x_{C}, x_{C\mid C}, \omega)&=\frac{x_{C}(1-x_{C|C})}{2} \Big[(k-1)b\frac{x_{C|C}-x_{C}}{1-x_{C}}\\
					&\quad +k(u-c)-b\Big]+O(\omega).
				\end{aligned}
			\end{equation}
			
			Besides, the time derivative of $x_{CC}$ is
			\begin{equation}\label{eq74}
				\begin{aligned}
					\frac{\textrm{d}x_{CC}(t)}{\textrm{d}t}&=\frac{\frac{1+(k-1)q_{C\mid D}}{kn/2}T^{+}(n_{C})- \frac{(k-1)x_{C|C}}{kn/2}T^{-}(n_{C})}{\frac{1}{n}}\\
					&=\frac{x_{C}(1-x_{C|C})}{k}\Big[1-(k-1)\frac{x_{C|C}-x_{C}}{1-x_{C}}\Big]+O(\omega).
				\end{aligned}
			\end{equation}
			
			Since $x_{C\mid C}=\frac{x_{CC}}{x_{C}}$, one obtains the derivative of $x_{C\mid C}$ with respect to  $t$ as
			\begin{equation}\label{eq75}
				\begin{aligned}
					\frac{\textrm{d}x_{C\mid C}(t)}{\textrm{d}t}=\Phi(x_{C}, x_{C\mid C}, \omega),
				\end{aligned}
			\end{equation}
			where
			\begin{equation}\label{eq76}
				\begin{aligned}
					\Phi(x_{C}, x_{C\mid C}, \omega)= \frac{1-x_{C|C}}{k}\Big[1-(k-1)\frac{x_{C|C}-x_{C}}{1-x_{C}}\Big]+O(\omega).
				\end{aligned}
			\end{equation}
			
			Combining \eqref{eq72} and \eqref{eq75},  the dynamical equation is described by
			\begin{equation}\label{eq77}
				\begin{aligned}
					\left\{\begin{array}{lc}
						\frac{\textrm{d}x_{C}(t)}{\textrm{d}t}=\omega \Psi(x_{C}, x_{C\mid C}, \omega),\\
						\frac{\textrm{d}x_{C\mid C}(t)}{\textrm{d}t}= \Phi(x_{C}, x_{C\mid C}, \omega).
					\end{array}\right.
				\end{aligned}
			\end{equation}
			System \eqref{eq77}  can be rewritten with a change in time scale as
			\begin{equation}\label{eq78}
				\left\{\begin{array}{lc}
					\frac{\textrm{d}x_{C}(\tau)}{\textrm{d}\tau}=\Psi(x_{C}, x_{C\mid C}, \omega),\\
					\omega\frac{\textrm{d}x_{C\mid C}(\tau)}{\textrm{d}\tau}=\Phi(x_{C}, x_{C\mid C}, \omega),
				\end{array}\right.
			\end{equation}
			where $\tau=\omega t$. We refer to the time scale given by $\tau$ as slow, whereas the
			time scale for $t$ is fast.
			Further, as long as $\omega\neq0$, the two systems above are equivalent
			and are referred to as singular perturbation when $0<\omega\ll1$. In \eqref{eq78},  letting $\omega\rightarrow0$, we obtain the
			system
			\begin{equation}\label{eq79}
				\left\{\begin{array}{lc}
					\frac{\textrm{d}x_{C}(\tau)}{\textrm{d}\tau}=\Psi(x_{C}, x_{C\mid C}, \omega),\\
					0=\Phi(x_{C}, x_{C\mid C}, \omega),
				\end{array}\right.
			\end{equation}
			which is called the reduced model.
			One thinks of the condition $\Phi(x_{C}, x_{C\mid C}, \omega)=0$ as determining a set on which the flow
			is given by $\frac{\textrm{d}x_{C}(\tau)}{\textrm{d}\tau}=\Psi(x_{C}, x_{C\mid C}, \omega)$.
			For $\omega=0$, the set $\mathcal{V}=\{(x_{C}, x_{C\mid C})\mid\Phi(x_{C}, x_{C\mid C}, 0)=0\}$ consists of two subsets
			\begin{equation}\label{eq80}
				\begin{aligned}
					\mathcal{M}_{0}^{0}=\big\{(x_{C}, x_{C\mid C})\big| x_{C\mid C}=1\big\}=\big\{(1, 1)\big\},
				\end{aligned}
			\end{equation}
			and
			\begin{equation}\label{eq81}
				\begin{aligned}
					\mathcal{M}_{0}^{1}=\Big\{(x_{C}, x_{C\mid C})\big| x_{C|C}=\frac{1}{k-1}+\frac{k-2}{k-1}x_{C}\Big\}.
				\end{aligned}
			\end{equation}
			It is noted that $\mathcal{M}_{0}^{0}$ is equal to $\{(1, 1)\}$ since $x_{CC}=x_{C}x_{C|C}$, that is, if $x_{C|C}=1$, then $x_{C}=1$.
			We prove this by contradiction. Suppose $x_{C}\neq1$, then there exists at least a $D$-agent in a population. Since the studied network is connected, at least a $C$-agent is linked to
			a $D$-agent, which leads to $x_{D|C}>0$. In addition, due to $x_{D|C}+ x_{C|C}=1$,  this implies that $x_{C|C}<1$,
			contradicting  $x_{C|C}=1$. Therefore, we verify the above conclusion.
			
			Based on Fenichel's Second Theorem\cite{Khalil96}, only $\mathcal{M}_{0}^{1}\subset \mathcal{V}$ is compact, with boundary and normally hyperbolic, and suppose $\Psi$ and $\Phi$  are smooth in $S$. Then for $\varepsilon>0$, which is sufficiently small, there exists a stable manifold $\mathcal{M}_{\varepsilon}^{1}$, that are $\mathcal{O}(\varepsilon)$ close and diffeomorphic to a stable manifold $\mathcal{M}_{0}^{1}$, and that are locally invariant under the flow of \eqref{eq81}. Hence, the manifold $\mathcal{M}_{0}^{1}$ is invariant manifold.
			In that case, the reduced model can characterize the dynamical equation of \eqref{eq78}  on $\mathcal{M}_{0}^{1}$, given by
			\begin{equation}\label{eq82}
				\begin{aligned}
					\frac{\textrm{d}x_{C}(\tau)}{\textrm{d}\tau} =\frac{k(k-2)}{2(k-1)}x_{C}(1-x_{C})(u-c).
				\end{aligned}
			\end{equation}
			Let $t=\frac{\tau}{\omega}$ and $x_{C}=x$, and the above equation is equivalent to
			\begin{equation}\label{eq83}
				\begin{aligned}
					\frac{\textrm{d}x(t)}{\textrm{d}t}=\frac{\omega k(k-2)}{2(k-1)}x(1-x)(u-c).
				\end{aligned}
			\end{equation}

			\subsection{Proof of Theorem~\ref{thm1}}\label{APPENDIXB}
			The Hamiltonian function $H_{I}(x, u, t)$ of \eqref{eq10} is defined as
			\begin{equation}\label{eq50}
				\begin{aligned}
					H_{I}(x, u, t)&=G_{I}(x, u, t)+\frac{\partial J_{I}^{\ast}}{\partial x}F_{I}(x, u,  t)\\
					&=\frac{1}{2}\big\{ n(n-1)u [p x+(1-p)(1-x)] \big\}^2\\
					&\quad +\frac{\partial J_{I}^{\ast}}{\partial x}x(1-x)(u-c),
				\end{aligned}
			\end{equation}
			where $J^\ast_I(x, t)$ is the optimal function of $x$ and $t$ for the optimally combined  incentive $u^{\ast}$, given by
			\begin{equation}\label{eq51}
				\begin{aligned}
					J_{I}^\ast(x, t)=\int^{t_{f}}_{t_0} G_{I}(x, u^{\ast}, t) \textrm{d}t.
				\end{aligned}
			\end{equation}
			Solving $\frac{\partial H_{I}}{\partial u}=0$, one can check that the optimally combined  incentive protocol  $u^{\ast}$ satisfies
			\begin{equation}\label{eq52}
				\begin{aligned}
					u^{\ast}=-\frac{x(1-x)}{\{n(n-1)[p x+(1-p)(1-x)]\}^{2}} \frac{\partial J_{I}^{\ast}}{\partial x}.
				\end{aligned}
			\end{equation}
			Traditionally, we shall solve the canonical equations of \eqref{eq50} to obtain the optimal incentive protocol.  However, the fact that the system \eqref{eq8}  is nonlinear makes it intractable to obtain the optimal incentive protocol by a direct calculation.  
			To this end, we employ  the approach of the Hamilton-Jacobi-Bellman equation to solve the optimal control problem \eqref{eq10}, and this equation can be written as
			\begin{equation}\label{eq53}
				\begin{aligned}
					-\frac{\partial J_{I}^\ast}{\partial t}&=H_{I}(x, u^{\ast}, t)=G_{I}(x, u^*, t)+\frac{\partial J_{I}^{\ast}}{\partial x}F_{I}(x, u^*,  t).
				\end{aligned}
			\end{equation}
			By substituting \eqref{eq52} into the above Hamilton-Jacobi-Bellman equation, one gets that
			\begin{equation}\label{eq54}
				\begin{aligned}
					-\frac{\partial J_{I}^{\ast}}{\partial t}&=\frac{1}{2}\Big\{ n(n-1)u^\ast \big[p x+(1-p)(1-x)\big] \Big\}^2\\
					&+\frac{\partial J_{I}^{\ast}}{\partial x}x(1-x)(u^{\ast}-c).
				\end{aligned}
			\end{equation}
			Under Assumption \ref{assumption2}, one knows that the optimal function $J_{I}^{\ast}(x, t)$ is independent of $t$, and then \eqref{eq54} turns out to be 
			\begin{equation}\label{eq55}
				\begin{aligned}
					\frac{\partial J_{I}^\ast}{\partial t}=0.
				\end{aligned}
			\end{equation}
			Furthermore, it is derived that 
			\begin{equation}\label{eq56}
				\begin{aligned}
					\frac{\partial J_{I}^{\ast}}{\partial x} = 0 \;\; {\rm or} \;\;
					\frac{\partial J_{I}^{\ast}}{\partial x}=-\frac{2c\{ n(n-1)[px+(1-p)(1-x)]\}^2}{x(1-x)}.
				\end{aligned}
			\end{equation}
			In view of the fact that  $u>0$ and $x\in(0, 1)$, $\frac{\partial J_{I}^{\ast}}{\partial x}$ is always negative (i.e., $\frac{\partial J_{I}^{\ast}}{\partial x}<0$). Thus,  the latter of above equation \eqref{eq56} can be satisfied, and by substituting it into \eqref{eq52}, one gets
			the optimally combined incentive protocol $u^{\ast}$ as
			\begin{equation}\label{eq57}
				\begin{aligned}
					u^{\ast}=2c. 
				\end{aligned}
			\end{equation}
			Furthermore, the system equation \eqref{eq8} can be rewritten as
			\begin{equation}\label{eq58}
				\begin{aligned}
					\frac{\textrm{d}x(t)}{\textrm{d}t}=x(1-x)c,
				\end{aligned}
			\end{equation}
			and its solution is 
			\begin{equation}\label{eq59}
				\begin{aligned}
					x(t)=\frac{1}{1+\frac{1-x_{0}}{x_{0}} e^{-ct}}.
				\end{aligned}
			\end{equation}
			By checking \eqref{eq59}, one can see that the system needs infinitely long time to reach a full cooperation state from any initial  state $x_0\in (0, 1)$.  Instead, under 
			Assumption \ref{assumption3}, the terminal state $x(t_{f})$ is assumed to be $1-\delta$, where $\delta$ is the parameter that determines the  proportion of cooperators in the population at the terminal time
			$t_{f}$. Since $x(t)$ increases monotonically over time $t$, then $x(t_{f})> x_{0}$ (i.e., $x_0+\delta<1$). Furthermore, one obtains the terminal time $t_{f}$ as
			\begin{equation}\label{eq60}
				\begin{aligned}
					t_{f}=\frac{1}{c}\ln\Big[\frac{(1-x_0)(1-\delta)}{x_0 \delta}\Big].
				\end{aligned}
			\end{equation}
			Combining \eqref{eq57}, \eqref{eq59}  and \eqref{eq60},  the amount of cumulative  incentive required by $u^{\ast}$ for the system to reach the expected terminal state $x(t_{f})$ from the initial state $x_0$ is
			\begin{equation}\label{eq61}
				\begin{aligned}
					J_{I}^\ast&=\int^{t_{f}}_{t_0} G_{I}(x, u^{\ast}, t) \textrm{d}t\\
					&=\int^{t_{f}}_{t_0}\frac{1}{2}\Big\{ n(n-1)u^\ast \big[p x+(1-p)(1-x)\big] \Big\}^2\textrm{d}t\\
					&=\alpha_{I} p^2+2\beta_{I} p(1-p)+\gamma_{I}(1-p)^2\\
					&=\vartheta_I \Big[(2p-1)^2(x_{0}-1+\delta)+p^2\ln\Big(\frac{1-x_0}{\delta}\Big)\\
					&\quad +(1-p)^2\ln\Big(\frac{1-\delta}{x_0}\Big)\Big],
				\end{aligned}
			\end{equation}
			where $\vartheta_I=2n^2(n-1)^2c$, $\alpha_{I}=\int^{t_{f}}_{t_0}\frac{1}{2}[n(n-1)xu^{\ast}]^{2}\textrm{d}t=\vartheta_I[x_{0}-1+\delta+\ln(\frac{1-x_0}{\delta})],$
			$\beta_{I}=\int^{t_{f}}_{t_0}\frac{1}{2}[n(n-1)u^{\ast}]^{2}x(1-x)\textrm{d}t=\vartheta_I(1-x_0-\delta)$, and 
			$\gamma_{I}=\int^{t_{f}}_{t_0}\frac{1}{2}[n(n-1)(1-x)u^{\ast}]^{2}\textrm{d}t=\vartheta_I[x_{0}-1+\delta+\ln(\frac{1-\delta}{x_0})].$

			\subsection{Proof of Theorem~\ref{thm2}}\label{APPENDIXC}
			The Hamiltonian function $H_{\textrm{S}}(x, u, t)$ of  \eqref{eq11} is
			\begin{equation}\label{eq62}
				\begin{aligned}
					H_{\textrm{S}}(x, u, t)&=G_{S}(x, u, t)+\frac{\partial J_{S}^{\ast}}{\partial x}F_{S}(x, u,  t)\\
					&=\frac{1}{2} \big\{ nku[px+(1-p)(1-x)] \big\}^2\\
					&+\frac{\partial J_{S}^{\ast}}{\partial x}\frac{\omega k(k-2)}{2(k-1)}x(1-x)(u-c),
				\end{aligned}
			\end{equation}
			where $J_{\textrm{S}}^\ast$ is the optimal cost function of $x$ and $t$ for the optimally combined incentive, given by
			\begin{equation}\label{eq63}
				\begin{aligned}
					J_{\textrm{S}}^\ast&=\int^{t_{f}}_{t_0} G_{S}(x, u^*, t) \textrm{d}t.
				\end{aligned}
			\end{equation}
			Solving $\frac{\partial H_{\textrm{S}}}{\partial u}=0$, one can check that  the optimally combined incentive protocol $u^{\ast}$ satisfies
			\begin{equation}\label{eq64}
				\begin{aligned}
					u^{\ast} =-\frac{x(1-x)}{\big\{nk[px+(1-p)(1-x)]\big\}^2} \frac{\omega k(k-2)}{2(k-1)}\frac{\partial J_{\textrm{S}}^{\ast}}{\partial x}.
				\end{aligned}
			\end{equation}
			The Hamilton-Jacobi-Bellman equation can be written as
			\begin{equation}\label{eq65}
				\begin{aligned}
					-\frac{\partial J_{\textrm{S}}^\ast}{\partial t}=H_{\textrm{S}}(x, u^{\ast}, t).
				\end{aligned}
			\end{equation}
			Under Assumption \ref{assumption2}, one gets 
			\begin{equation}\label{eq66}
				\begin{aligned}
					\frac{\partial J_{\textrm{S}}^\ast}{\partial t}=0,
				\end{aligned}
			\end{equation}
			and accordingly, yields that
			\begin{equation}\label{eq67}
				\begin{aligned}
					\frac{\partial J_{\textrm{S}}^{\ast}}{\partial x} = 0 \;\; {\rm or} \;\;
					\frac{\partial J_{\textrm{S}}^{\ast}}{\partial x}=-\frac{2c\big\{nk[px+(1-p)(1-x)]\big\}^2}{ \frac{\omega k(k-2)}{2(k-1)}x(1-x)}.
				\end{aligned}
			\end{equation}
			Since $u>0$ and $x\in(0, 1)$,  $\frac{\partial J_{\textrm{S}}^{\ast}}{\partial x}<0$.  Thus,  the latter of above equation \eqref{eq67} can be satisfied, and by futher substituting it into \eqref{eq64},  one obtains the optimally combined incentive protocol $u^{\ast}$ and the system equation \eqref{eq9}, respectively given by  
			\begin{equation}\label{eq68}
				\begin{aligned}
					u^{\ast}=2c,
				\end{aligned}
			\end{equation}
			and 
			\begin{equation}\label{eq69}
				\begin{aligned}
					\frac{\textrm{d}x(t)}{\textrm{d}t}= \frac{\omega k(k-2)c}{2(k-1)}x(1-x).
				\end{aligned}
			\end{equation}
			For the system \eqref{eq69},  its solution is
			\begin{equation}\label{eq70}
				\begin{aligned}
					x(t)=\frac{1}{1+\frac{1-x_{0}}{x_{0}} e^{-\frac{\omega k(k-2)c}{2(k-1)}t}}.
				\end{aligned}
			\end{equation}
			Under Assumption \ref{assumption3} and when $x(t_{f})=1-\delta$, one gets the terminal time  $t_{f}$ and the amount of cumulative  cost for the optimal level $u^{\ast}$, which respectively are
			\begin{equation}\label{eq71}
				\begin{aligned}
					t_{f}=\frac{2(k-1)}{\omega k(k-2)c}\ln\Big[\frac{(1-x_0)(1-\delta)}{x_0 \delta}\Big],
				\end{aligned}
			\end{equation}
			and
			\begin{equation}
				\begin{aligned}
					&J_{S}^\ast=\int^{t_{f}}_{t_0} G_{S}(x, u^{\ast}, t) \textrm{d}t\\
					&=\int^{t_{f}}_{t_0}\frac{1}{2} \Big\{ nku^\ast \big[px+(1-p)(1-x)\big] \Big\}^2\textrm{d}t\\
					&=\alpha_{S} p^2+2\beta_{S} p(1-p)+\gamma_{S}(1-p)^2\\
					&=\vartheta_{S} \Big[(2p-1)^2(x_{0}-1+\delta)+p^2\ln\Big(\frac{1-x_0}{\delta}\Big)\\
					&+(1-p)^2\ln\Big(\frac{1-\delta}{x_0}\Big)\Big],
				\end{aligned}
			\end{equation}
			where $\vartheta_S=\frac{4n^2k(k-1)c}{\omega(k-2)}$, 
			$\alpha_S=\int^{t_{f}}_{t_0}\frac{1}{2}(knxu^{\ast})^{2}\textrm{d}t=\vartheta_S[x_{0}-1+\delta+\ln(\frac{1-x_0}{\delta})],$
			$\beta_S=\int^{t_{f}}_{t_0}\frac{1}{2}(nku^{\ast})^{2}x(1-x)\textrm{d}t=\vartheta_S(1-x_0-\delta),$ and
			$\gamma_S=\int^{t_{f}}_{t_0}\frac{1}{2}[kn(1-x)u^{\ast}]^{2}\textrm{d}t=\vartheta_S[x_{0}-1+\delta+\ln(\frac{1-\delta}{x_0})].$

			\section*{References}

			\begin{IEEEbiography}[{\includegraphics[width=1in,height=1.25in,clip,keepaspectratio]{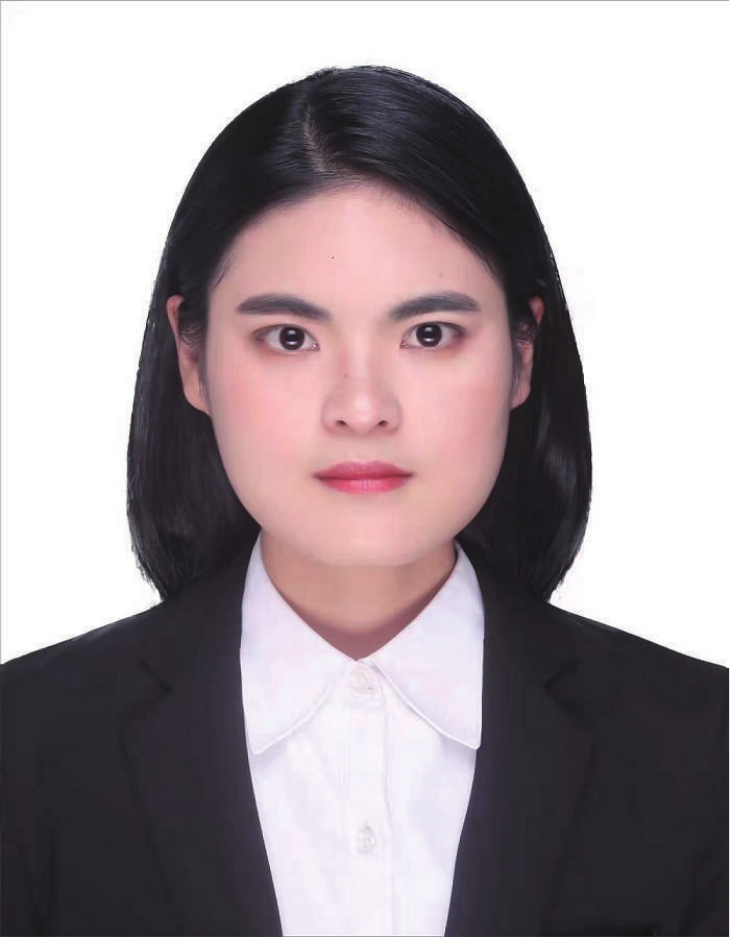}}]{Shengxian Wang} received the Ph.D. degree in mathematics from University of Electronic Science and Technology of China, Chengdu, China, in 2023. From December 2020 to December 2022, Dr. Wang was Guest Ph.D. in University of Groningen, Groningen, The Netherlands. She currently works with Anhui Normal University, Wuhu, China. Her research interests include evolutionary game dynamics, decision-making in game interactions, game-theoretical control, and collective intelligence. 
			\end{IEEEbiography}
			
			\begin{IEEEbiography}[{\includegraphics[width=1in,height=1.25in,clip,keepaspectratio]{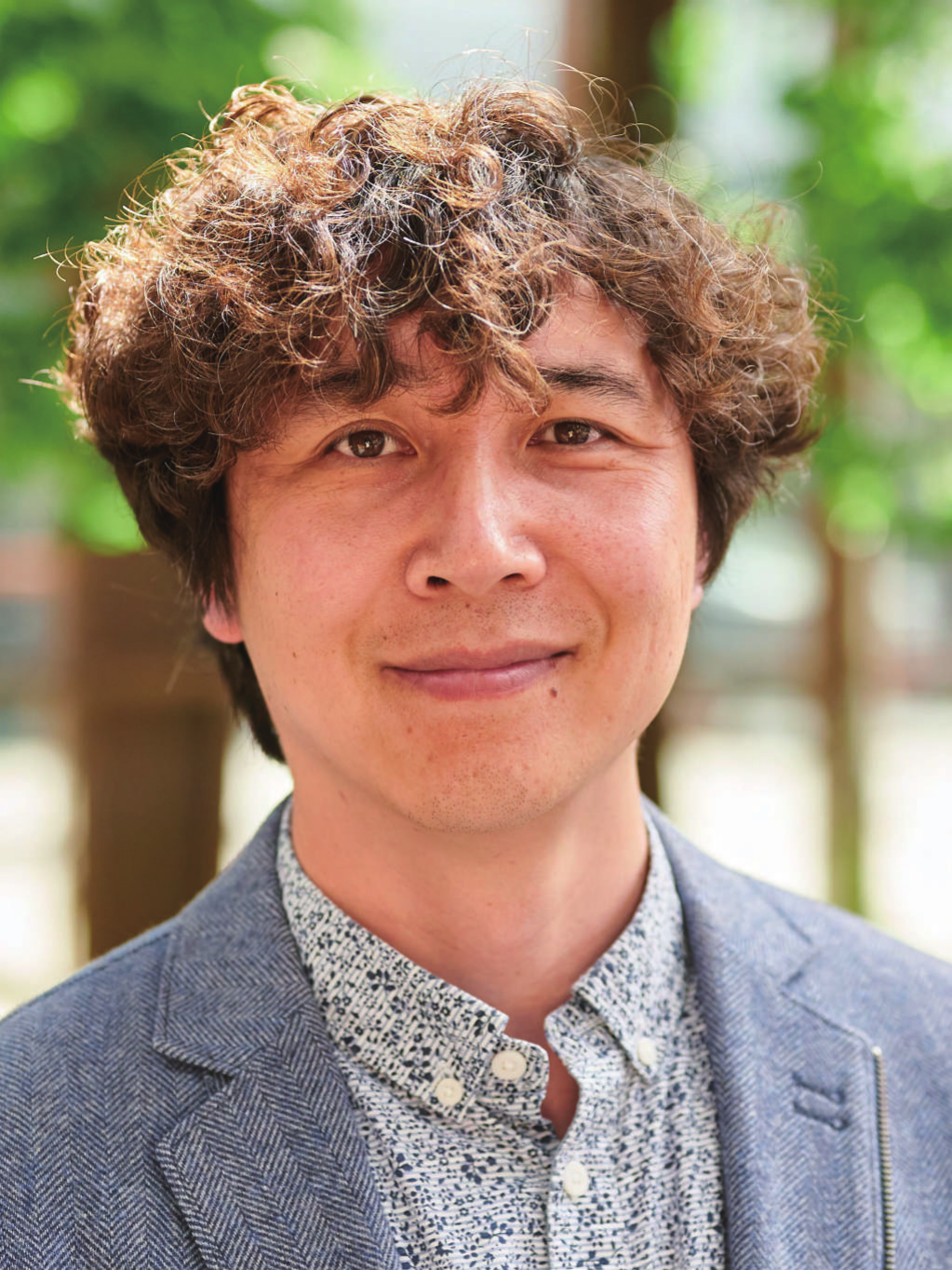}}]{Ming Cao} (Fellow, IEEE) has since 2016 been a professor of networks and robotics with the Engineering and Technology Institute (ENTEG) at the University of Groningen, the Netherlands, where he started as an assistant professor in 2008. Since 2022 he is the director of the Jantina Tammes School of Digital Society, Technology and AI at the same university. He received the Bachelor degree in 1999 and the Master degree in 2002 from Tsinghua University, China, and the Ph.D. degree in 2007 from Yale University, USA. From 2007 to 2008, he was a Research Associate at Princeton University, USA. He worked as a research intern in 2006 at the IBM T. J. Watson Research Center, USA. He is the 2017 and inaugural recipient of the Manfred Thoma medal from the International Federation of Automatic Control (IFAC) and the 2016 recipient of the European Control Award sponsored by the European Control Association (EUCA). He is an IEEE fellow. He is a Senior Editor for Systems and Control Letters, an Associate Editor for IEEE Transactions on Automatic Control, IEEE Transaction of Control of Network Systems and IEEE Robotics \& Automation Magazine, and was an associate editor for IEEE Transactions on Circuits and Systems and IEEE Circuits and Systems Magazine. He is a member of the IFAC Council and a vice chair of the IFAC Technical Committee on Large-Scale Complex Systems. His research interests include autonomous robots and multi-agent systems, complex networks and decision-making processes.
			\end{IEEEbiography}
			
			\begin{IEEEbiography}[{\includegraphics[width=1in,height=1.25in,clip,keepaspectratio]{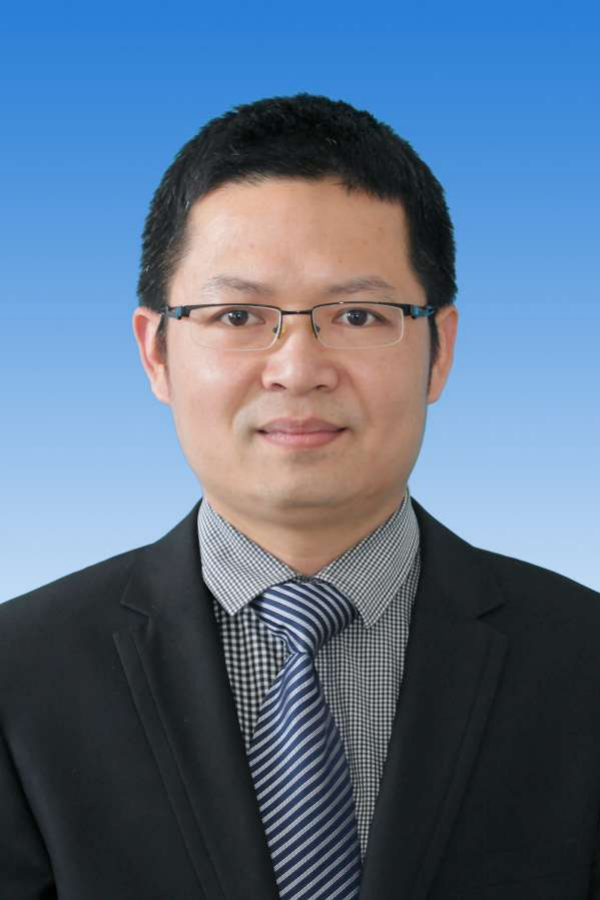}}]{Xiaojie Chen} is currently a professor at School of Mathematical Sciences in University of Electronic Science and Technology of China, China. He received the Bachelor degree in 2005 from National University of Defense Technology, China, and the Ph.D. degree in 2011 from Peking University, China. From September 2008 to September 2009, he was a visiting scholar in University of British Columbia, Canada. From February 2011 to January 2013, he was a postdoctoral research scholar at the International Institute for Applied Systems Analysis (IIASA), Austria. From February 2013 to January 2014, he was a research scholar at IIASA, Austria. He severs as the editorial board member for several international journals including Scientific Reports, PLoS ONE, and Frontiers in Physics. His main research interests include evolutionary game dynamics, decision-making in game interactions, game-theoretical control, and collective intelligence. He has published over $100$ journal papers.
			\end{IEEEbiography}

\end{document}